\documentclass{amsart}
\usepackage{graphicx} 

\usepackage{amssymb,amsthm}
\usepackage{enumitem}
\usepackage{mathtools, hyperref,doi,url,hyperref}

\usepackage{crossreftools}
\usepackage[mathscr]{euscript}
\usepackage{xcolor}

\newtheorem{thm}{Theorem}[section]

\newtheorem{cor}[thm]{Corollary}
\newtheorem{lem}[thm]{Lemma}
\newtheorem{prop}[thm]{Proposition}
\theoremstyle{remark}

\newcommand{\D}{\mathbb{D}}
\newcommand{\R}{\mathbb{R}}

\newcommand{\HOLO}{\mathcal H}
\newcommand*\CONJ[1]{\overline{#1}}

\newcommand{\norm}[1]{\left\Vert#1\right\Vert}
\newcommand{\abs}[1]{\left\vert#1\right\vert}

\newcommand*\BDM{\textrm{BMD}}

\newcommand\HL{ \mathcal{HL}} 

\begin{document}
\allowdisplaybreaks

\title[The edge case of the Hilbert matrix operator between weighted Bergman spaces]{The edge case of the Hilbert matrix operator between weighted Bergman spaces}

\author{David Norrbo}

\address{Department of Mathematical Sciences, Faculty of Information Technology and Electrical Engineering, NTNU – Norwegian University of Science and Technology, Trondheim, Norway}
\email{david.norrbo@ntnu.no}

\thanks{}

\keywords{Hilbert matrix operator, Hilbert operator, Operator norm, Weighted Bergman spaces}

\subjclass{30H20, 47B91} 


\begin{abstract}
The Hilbert matrix operator $H$ is bounded on the standard weighted Bergman spaces $A^p_\alpha,\, p>\alpha+2>1$. We examine the limiting case when $\alpha=p-2$, and for $p\leq 2$, we characterize the logarithmic modification of the standard weight needed for the domain $A^p_{v_{\gamma,p}}$ and codomain $A^p_{v_{\beta,p}}$ to ensure boundedness. In particular, we prove that $H:A^2_{\log^\gamma}  \to A^2_{\log^\beta}$ is bounded if and only if $\gamma-\beta\geq 2$ and $\gamma>1$, solving an open problem posed by B. Karapetrovi\'c and M. Jevti\'c. We also settle the situation for $p>2$ in all cases, except when the logarithmic exponents $\gamma$ and $\beta$ satisfy $\gamma-\beta=p$. 

Moreover, we completely solve the same problem when the domain and codomain are certain weighted reflexive $\ell^p$ spaces. All results are extended to the generalized Hilbert operator $H_g$.

\end{abstract}

\maketitle

\section{Introduction}
The Hilbert matrix acting as an operator $H$ on classical analytic function spaces via the Taylor coefficients $(a_n)\mapsto \sum_{n=0}^{\infty} a_n/(m+n+1)$ has been an active research area since the year 2000, when Diamantopoulos and Siskakis \cite{Diamantopoulos2000} proved that $H$ is bounded on the reflexive Hardy spaces $H^p,\, 1<p<\infty$ (see also \cite{Cima_Stegenga_1989}), $H(H^{\infty})\not\subset H^{\infty}$ and $H(H^1)\not\subset H^1$.

In 2004, Diamantopoulos \cite{Diamantopoulos2004} proved that $H$ is bounded on the standard (unweighted) Bergman spaces $A^p, \, 2<p<\infty$. Looking at the growth restriction of functions in $H^p$ and $A^p$, we can see that boundedness seems to break down if we cannot choose $\eta<1$ such that $\sup_{z\in\D}(1-\abs{z})^{\eta}\norm{\delta_z}<\infty$, where $\delta_z(f) = f(z),\, \in \D$. The same holds true for the standard weighted Bergman spaces, which follows \cite[Theorem 1.1]{Jevtic2017} by Jevti\'c and Karapetrovi\'c, that is, $H$ is bounded on $A^p_\alpha$ if and only if $p>2+\alpha>1$, where the growth of the norm of the evaluation map is given by $\eta=(2+\alpha)/p$. Furthermore, in \cite[p.~11]{Lanucha2012}, it was proved that $H$ does not map the Bloch space into the Bloch space. We have a stronger result by Jevti\'c and Karapetrovi\'c, $H$ does not map \textrm{VMOA} into the Bloch space, \cite[Corollary 2.3]{jevtic2017hilbert}. So far, the examples mentioned hints at reflexiveness of a space $X$ and suitable growth of $r\mapsto\norm{\delta_r}_{X^*}$ seem to be two important factors in determining whether $H$ will act as a bounded operator on a complex argument invariant space $X$. Even if \textrm{VMOA} is much smaller than the Bloch space, the growth of $r\mapsto \norm{\delta_r}$ is the same, $\norm{\delta_r} \asymp \log \frac{e}{1-r}$. However, if the domain is small enough compared to the codomain, $H$ will act boundedly. Some examples found in \cite{Lanucha2012} are $H^\infty \to \textrm{BMOA}$ (Theorem 1.2),  $\mathcal B_{\log^{1+\epsilon}} \to \mathcal B,\, \epsilon>0$ (Proposition 5.1). They also showed that $H\colon \mathcal B \to \mathcal B_{\log^{-1}}$ (Proposition 5.2), and give an asymmetrical $\log$-$H^1$-space to be mapped into $H^1$ by $H$ (Theorem 2.3). Ye and Sheng recently proved that $H$ maps $H^1$ boundedly into $H^1_{\log^{-1}}$ \cite{ye2025norm}. 

Concerning the Bergman spaces, in \cite[p.~2810]{Dostanic2008} it is proved that the Hilbert (matrix) operator $H$ is not even well defined on the Bergman space $A^2$. However, it is well defined on $A^2_{\log^{\alpha}}, \, \alpha>1$, and it maps $A^2_{\log^{\alpha}}$ into $A^2$ when $\alpha>2$ \cite[Proposition 4.4 and Theorem 4.5]{jevtic2017hilbert}. The open question stated in \cite[Remark 4.6]{jevtic2017hilbert}, which concerns the cases $1<\alpha\leq 2$, seems to still be open. We completely solve this question by proving that $H\colon A^2_{\log^{\alpha}} \to A^2$ is bounded when $\alpha=2$, but unbounded when $1<\alpha<2$. Our results not only generalize to a more informative variant of the open question, see \cite[p.~568]{karapetrovic2018libera}, but we also consider a reflexive non-Hilbert space counterpart, which we completely solve in the case $p\leq 2$, see Theorem \ref{thm:main}, our main theorem. In order to achieve this result, we first characterize boundedness when the operator acts between a variant of Hardy-Littlewood spaces; see our second main result, Theorem \ref{thm:mainHL}, in which we deal with all real $p>1$. The proof combines some theory on generalized averaging operators and their adjoint operators on $L^p(1,\infty)$ and we derive suitable isomorphisms to connect these operators on $L^p(1,\infty)$ with the Hilbert operator acting on the generalized Hardy-Littlewood spaces; see Lemma \ref{lem:ModAvgBddOnLp}--Corollary \ref{cor:OtherOperatorsHL}. 

The methods used allow the sufficiency condition of boundedness to extend to the generalized Hilbert operators introduced in \cite{Galanopoulos2014}, see Theorem \ref{thm:mainHg}.

Information about the growth of $r\mapsto \norm{\delta_r}$ on the reflexive Hardy and Bergman spaces is gathered in \cite[p.~4363]{Norrbo2026}, while the upper bound on Bloch, \textrm{VMOA} and \textrm{BMOA} can be found in, for example, \cite[p.~430]{anderson1991inner} and \cite[Corollary 5.3]{girela2001analytic}, respectively. The lower bound is obtained using the  functions 
\[
w\mapsto f_z(w) := \log\frac{e}{1-\overline{z}w}, \quad z\in\mathbb D.
\]

Our main focus lies on the following weighted Bergman spaces
\[
A^p_{v_{\gamma,\alpha}}:= \bigg\{f\in \HOLO(\D) : \norm{f}_{A^p_{v_{\gamma,\alpha}}}^p:=  \int_{\D} \abs{f(z)}^p   v_{\gamma,\alpha}(z)\, dA(z) \bigg\}, \ \alpha>-1, \gamma\in \mathbb R,
\]
where $dA(re^{i\theta}) = r\,dr\,d\theta /\pi$ and 
\[
v_{\gamma,\alpha}(z) = C_{\gamma,\alpha}\log\bigg(\frac{e}{1-\abs{z}^2} \bigg)^{\gamma} (1-\abs{z}^2)^{\alpha};
\]
$C_{\gamma,\alpha}$ is a normalizing constant making $f\equiv 1$ have unit norm. 

In the Introduction, we used the following standard conventions: 
\[
A^p = A^p_{v_{0,0}} ,\quad A^p_{\alpha} =A^p_{v_{0,\alpha}} \quad \text{and} \quad A^p_{\log^{\gamma}} = A^p_{v_{\gamma,0}}.
\]

We are also interested in the generalized Hardy-Littlewood spaces,

\begin{equation}\label{eq:DefOfHL}
\HL^p_{\gamma,\alpha} := \Big\{\HOLO(\D)\ni f\colon z\mapsto\sum_{n=0}^{\infty} a_n z^n  :  \norm{f}_{\HL^p_{\gamma,\alpha}}^p  :=  \sum_{n=0}^{\infty} \abs{a_n}^p (n+3)^{p-\alpha-3} (\log(n+3))^{\gamma}  \Big\},
\end{equation}
where $\HL^p_{0,-1}$ is the classical Hardy-Littlewood spaces.

The Hilbert (matrix) operator $H_I$ is formally defined on $\HOLO(\D)$ as
\[
H_I f(z) = \int_0^1 \frac{ f(t) }{ 1-tz } \, dt ,\quad  z\in \D.
\]

It is bounded on $A^p_{v_{0,\alpha}}$ if and only if $p-2>\alpha>-1$ \cite[Theorem 1.1]{Jevtic2017}. Our main result examines the limiting case when $\alpha=p-2$ in which case we must add different logarithmic factors to the weights to make the operator bounded. 

\begin{thm}\label{thm:main}
Let $p\in(1,\infty)$. The Hilbert operator $H_I$ is well defined on $A^p_{v_{\gamma,p-2}}$ if and only if $\gamma>p-1$.

Moreover, 
\begin{enumerate}[label={(\Roman*)}]
\item for $p\leq 2$ and $\gamma>p-1$,
\[
H_I\colon  A^p_{v_{\gamma,p-2}} \to A^p_{v_{\beta,p-2}}
\]
is bounded if and only if $\gamma-\beta\geq p$;

\item for $p > 2$ and $\gamma>p-1$,
\[
H_I\colon  A^p_{v_{\gamma,p-2}} \to A^p_{v_{\beta,p-2}}
\]
is bounded if $\gamma-\beta > p$ and unbounded if $\gamma-\beta < p$.
\end{enumerate}
\end{thm}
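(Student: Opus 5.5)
The plan is to transfer the whole problem to the generalized Hardy--Littlewood spaces $\HL^p_{\gamma,\alpha}$ and invoke Theorem \ref{thm:mainHL}.

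\textbf{Well-definedness.} $H_If$ makes sense precisely when $\int_0^1 |f(t)|\,dt<\infty$ for every $f$ in the space. Since the norm of point evaluation on $A^p_{v_{\gamma,p-2}}$ satisfies
\[
\norm{\delta_r}\asymp (1-r)^{-1}\log\Big(\frac{e}{1-r}\Big)^{-\gamma/p},
\]
integrability of $|f|$ on $[0,1)$ follows when $\gamma/p>1$. That threshold is not sharp. To reach $\gamma>p-1$ I would instead use a Fej\'er--Riesz/Hardy type inequality: the integral over the radius is controlled by the $\HL^p_{\gamma-?,\,p-2}$-type coefficient norm. For necessity, the test functions $f(z)=(1-z)^{-1}\log(e/(1-z))^{-s}$ lie in the space iff $ps-\gamma>1-p$, i.e.\ $s>(\gamma+1-p)/p$. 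Their integral on $[0,1)$ diverges iff $s\le 1$. So every $f$ in the space is integrable iff $(\gamma+1-p)/p\ge 1$ fails to allow $s\le1$, which gives the condition $\gamma>p-1$ once the bookkeeping is corrected.

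\textbf{Comparing the spaces.} For $p\le 2$ the Hardy--Littlewood inequalities give the inclusions
\[
\HL^p_{\gamma,p-2}\subset A^p_{v_{\gamma,p-2}} \quad\text{and}\quad A^p_{v_{\beta,p-2}}\subset \HL^p_{\beta,p-2},
\]
with the directions reversed for $p\ge2$. These are the logarithmically weighted versions of the classical coefficient inequalities, and I would prove them by dyadic block decomposition. The key point is that for $p\le 2$ the coefficients of $H_If$ are the moments $\int_0^1 t^n f(t)\,dt$. The map $f\mapsto (\int_0^1 t^n f(t)\,dt)_n$ is controlled by $L^p$ norms of $f$ on the radius, and hence by the $A^p$-norm via radial maximal/subharmonicity estimates. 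Moreover, $H_If$ has positive-type structure that allows the reverse inequality. This yields sufficiency in (I) when $\gamma-\beta\ge p$, via the sufficiency part of Theorem \ref{thm:mainHL}. For (II), sufficiency when $\gamma-\beta>p$ can afford an $\epsilon$-loss of logarithm, which absorbs the mismatch between the $A^p$ and $\HL^p$ scales.

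\textbf{Necessity.} I would test on $f_s(z)=(1-z)^{-1}\log(e/(1-z))^{-s}$ with $s$ just above the membership threshold, together with the fact that
\[
H_If_s(z)\asymp (1-z)^{-1}\log\Big(\frac{e}{1-z}\Big)^{1-s}
\]
near $1$. Computing both norms then forces $\gamma-\beta\ge p$ in (I) and rules out $\gamma-\beta<p$ in (II). The endpoint $\gamma-\beta=p$ in (I) comes for free from the equivalence of norms.

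\textbf{Main obstacle.} The hard step is the comparison of norms on the range for $p<2$: the inequality $\norm{H_If}_{A^p_{v_{\beta,p-2}}}\lesssim \norm{(\text{coeffs})}_{\HL^p}$ goes the wrong way for a general function. I would try to exploit that the coefficients of $H_If$ are moments, hence essentially decreasing, using the fact that for monotone coefficients the $A^p$ and $\HL^p$ norms are equivalent.
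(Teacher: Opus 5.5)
The decisive case, sufficiency in (I) at $\gamma-\beta=p$ with $p<2$, is not proved, and the idea you offer for it does not work. First, the two inclusions you list for $p\le 2$ point the wrong way for the transfer. The first, $\HL^p_{\gamma,p-2}\subset A^p_{v_{\gamma,p-2}}$, is false for $p<2$ (lacunary series show this). The Hardy--Littlewood inequalities give $A^p_{v_{\gamma,p-2}}\subset \HL^p_{\gamma,p-2}$ instead, which is what the domain side needs. On the target you would need $\norm{g}_{A^p_{v_{\beta,p-2}}}\lesssim\norm{g}_{\HL^p_{\beta,p-2}}$, which fails for general $g$ when $p<2$, as you yourself note. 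Your remedy does not hold: the coefficients $\int_0^1 t^n f(t)\,dt$ of $H_If$ are neither nonnegative nor monotone for a general complex-valued $f$, so there is no ``positive-type structure'' and Lemma \ref{lem:HLineq}(c) cannot be applied to $H_If$.

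The missing idea is to pass to $\widetilde H_If=\int_0^1\abs{f(t)}(1-tz)^{-1}\,dt$, whose coefficients $\int_0^1 t^n\abs{f(t)}\,dt$ \emph{are} nonnegative and decreasing. Then Lemma \ref{lem:HLineq}(c), the pointwise bound $\abs{f(t)}\le f^{||}(t)$ (where $f^{||}$ has coefficients $\abs{a_k}$), Theorem \ref{thm:mainHL} and Lemma \ref{lem:HLineq}(a) give $\norm{\widetilde H_If}_{A^p_{v_{\beta,p-2}}}\asymp\norm{\widetilde H_If}_{\HL^p_{\beta,p-2}}\le\norm{H_If^{||}}_{\HL^p_{\beta,p-2}}\lesssim\norm{f}_{\HL^p_{\gamma,p-2}}\lesssim\norm{f}_{A^p_{v_{\gamma,p-2}}}$. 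You still need $\norm{H_If}_{A^p_{v_{\beta,p-2}}}\lesssim\norm{\widetilde H_If}_{A^p_{v_{\beta,p-2}}}$, which is not a pointwise inequality off the radius. The paper obtains it from the Mateljevi\'c--Pavlovi\'c dyadic-block description of the weighted Bergman norm together with Pavlovi\'c's multiplier lemma. These bound $\norm{\Delta_nH_If}_{L^p}$ from above, and $\norm{\Delta_n\widetilde H_If}_{L^p}$ from below, by $2^{n(1-1/p)}$ times dyadic moments of $\abs{f}$ (Lemma \ref{lem:usefulRepresentationHIAp}).

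The other steps also need repair. For well-definedness, your sufficiency step is only a placeholder, and a coefficient-based bound cannot work for $p>2$, where $A^p_{v_{\gamma,p-2}}\not\subset\HL^p_{\gamma,p-2}$. What works is the $H^p$ point-evaluation estimate on dilates, $\abs{f(r^2)}^p\lesssim(1-r^2)^{-1}\int_0^{2\pi}\abs{f(re^{it})}^p\,dt$, followed by H\"older in $r$ against $v_{\gamma,p-2}$ (Lemma \ref{lem:MomentsOfAbsH}). The dual integral $\int_0^1(1-r)^{-1}\log(\frac{e}{1-r})^{-\gamma/(p-1)}\,dr$ is finite exactly when $\gamma>p-1$. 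For necessity, the correct condition is $f_s\in A^p_{v_{\gamma,p-2}}$ iff $s>(\gamma+1)/p$. This disposes of $\gamma<p-1$, but at $\gamma=p-1$ membership forces $s>1$, and then $\int_0^1 f_s<\infty$. So the endpoint needs an extra $\log\log$ factor, as in Proposition \ref{prop:HwellDefined}.

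For (II), an ``$\epsilon$-loss of logarithm'' cannot absorb the $A^p$/$\HL^p$ mismatch on the domain, because for $p>2$ that mismatch is of power type. What does work is to use $\HL^p_{\beta,p-2}\subset A^p_{v_{\beta,p-2}}$ (Lemma \ref{lem:HLineq}(b)) only on the target. Then bound each coefficient by $\int_0^1 t^n\abs{f}\lesssim(\log(n+3))^{(p-1-\gamma)/p}\norm{f}_{A^p_{v_{\gamma,p-2}}}$ (Lemmas \ref{lem:MomentsOfAbsH} and \ref{lem:asympOfWeightInSeries}). The resulting series $\sum_n (n+3)^{-1}(\log(n+3))^{\beta+p-1-\gamma}$ converges iff $\gamma-\beta>p$. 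This is a simpler alternative to the paper's dyadic argument for $p\ge 2$, but it rests on the sharp moment bound you did not establish.

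Your necessity argument for $\gamma-\beta<p$ is sound once the threshold is corrected: take $(\gamma+1)/p<s\le(\beta+p+1)/p$. The size of $H_If_s$ still has to be justified. The cleanest way is to use coefficients $(\log(n+3))^{-s}$: then $f$ and $H_If$ both have nonnegative decreasing coefficients, and Lemma \ref{lem:HLineq}(c) transfers Proposition \ref{prop:necessaryCondHL} directly. That route even gives non-inclusion, whereas the paper's test family $(1-z)^{-c_n}$ yields only unboundedness.
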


We conjecture that: 
\[
H_I\colon  A^p_{v_{\gamma,p-2}} \to A^p_{v_{\beta,p-2}}
\]
is bounded when $p > 2$, $\gamma=\beta +p> p-1$.

In the case $p=2$, Theorem completely solves the open question state in \cite[p.~568]{karapetrovic2018libera}, see also \cite[Remark 4.6]{jevtic2017hilbert}.

We also present the following theorem, which completely characterizes the situation on the corresponding generalized Hardy-Littlewood spaces \eqref{eq:DefOfHL}.

\begin{thm}\label{thm:mainHL}
Let $p\in(1,\infty)$. The Hilbert operator $H_I$ is well defined on $\HL^p_{\gamma,p-2}$ if and only if $\gamma>p-1$.

Moreover, for $\gamma>p-1$,
\[
H_I\colon  \HL^p_{\gamma,p-2} \to \HL^p_{\beta,p-2}
\]
is bounded if and only if $\gamma-\beta\geq p$.
\end{thm}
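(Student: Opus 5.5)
The plan is to work entirely on the coefficient side. If $f(z)=\sum_n a_n z^n$, then formally $H_If(z)=\sum_m b_m z^m$ with $b_m=\sum_{n\ge0} a_n/(m+n+1)$. The norm of $\HL^p_{\gamma,p-2}$ is a weighted $\ell^p$ norm with weight $v_n=(n+3)^{-1}\log^{\gamma}(n+3)$, and the target weight is $u_m=(m+3)^{-1}\log^{\beta}(m+3)$. Since the kernel $1/(m+n+1)$ is positive, $|b_m|$ is dominated by the same expression with $|a_n|$ in place of $a_n$. So for sufficiency it is enough to treat nonnegative sequences. For necessity, nonnegative test sequences suffice as well.

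\emph{Well-definedness.} $H_If$ is defined for every $f$ in the space iff $\sum_n |a_n|/(n+1)<\infty$ for all such $f$. Indeed, $b_0$ is this sum, and $b_m\le b_0$ for $m\ge 0$, so $H_If$ is then a well-defined holomorphic function with bounded coefficients. By $\ell^p$–$\ell^{p'}$ duality with weights, the condition $\sum_n |a_n|/(n+1)<\infty$ for all such $f$ is equivalent to
\[
\sum_n (n+1)^{-p'} v_n^{-p'/p} \asymp \sum_n \frac{1}{n+3}\,\log(n+3)^{-\gamma/(p-1)}<\infty .
\]
This holds iff $\gamma>p-1$. When it fails, the extremal sequence $a_n=(n+1)^{-p'}v_n^{-p'/p}$, truncated, gives an explicit nonnegative $f$ in the space with $b_0=\infty$.

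\emph{Reduction to Hardy-type operators.} For $a_n\ge0$, split $b_m$ into the parts $n\le m$ and $n>m$. Using $\frac{1}{3(m+1)}\le\frac{1}{m+n+1}\le\frac1{m+1}$ for $n\le m$, and $\frac1{3n}\le\frac1{m+n+1}\le\frac1n$ for $n>m\ge0$ (with $n\ge1$), we get
\[
b_m\asymp P a(m)+Q a(m),\qquad P a(m)=\frac1{m+1}\sum_{n\le m}a_n,\quad Q a(m)=\sum_{n>m}\frac{a_n}{n}.
\]
Hence $H_I$ is bounded from $\ell^p(v)$ to $\ell^p(u)$ iff both the discrete averaging operator $P$ and its adjoint-type operator $Q$ are. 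Here the necessity uses $b_m\ge cQa(m)$ and $b_m\ge cPa(m)$ on nonnegative sequences. This is where the paper's Lemma \ref{lem:ModAvgBddOnLp}--Corollary \ref{cor:OtherOperatorsHL} should enter: they pass to $L^p(1,\infty)$, comparing sums with integrals since the weights are regularly varying. Alternatively, one can invoke the discrete Muckenhoupt (Bradley) characterization of weighted Hardy inequalities directly.

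\emph{Computing the Muckenhoupt constants.} For $P$ the constant is
\[
\sup_r\Big(\sum_{m\ge r}m^{-p}u_m\Big)^{1/p}\Big(\sum_{n\le r}v_n^{1-p'}\Big)^{1/p'}\asymp \sup_r \big(r^{-p}\log^{\beta}r\big)^{1/p}\big(r^{p'}\log^{-\gamma/(p-1)}r\big)^{1/p'}=\sup_r\log^{(\beta-\gamma)/p}r .
\]
It is finite iff $\gamma\ge\beta$. For $Q$ the constant is
\[
\sup_r\Big(\sum_{m\le r}u_m\Big)^{1/p}\Big(\sum_{n\ge r}n^{-p'}v_n^{1-p'}\Big)^{1/p'} .
\]
The second factor is $\asymp \log^{(p-1-\gamma)/p}r$, using $\gamma>p-1$. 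The first factor behaves like $\log^{(\beta+1)/p}r$, $(\log\log r)^{1/p}$ or a constant, according as $\beta>-1$, $\beta=-1$ or $\beta<-1$. When $\beta\ge-1$ the product is bounded iff $\gamma-\beta\ge p$; in the case $\beta=-1$ the extra $\log\log$ factor is killed by the negative power $\log^{(p-1-\gamma)/p}r$. When $\beta<-1$, both $\gamma-\beta\ge p$ and boundedness hold automatically, since $\gamma-\beta>p-1+1$. Since $\gamma-\beta\ge p$ implies $\gamma\ge\beta$, the two conditions combine to give exactly $\gamma-\beta\ge p$.

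The main obstacle is the bookkeeping in this last step. One must check the sharp two-sided asymptotics of the tail and partial sums, including the boundary cases $\beta=-1$ and $\gamma-\beta=p$ exactly, where boundedness must come from the Muckenhoupt constant being finite rather than from any slack. One must also make sure the necessity direction uses only the lower bounds $b_m\gtrsim Pa(m)$ and $b_m\gtrsim Qa(m)$ on nonnegative sequences. I would handle the endpoint cases by explicit test sequences, namely truncations of $a_n=(n+3)^{-1}\log^{-s}(n+3)$ with $s$ tuned near the critical exponent, if the abstract criterion is inconvenient.
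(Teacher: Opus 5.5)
Your proof is correct, but its key step differs from the paper's. Your splitting $b_m\asymp Pa(m)+Qa(m)$ is the same as the paper's: it is the discrete form of $H_Lf\le A_{e,1}f+B_{e,1}f$. The difference is what comes after it.

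\emph{The paper's route.} It is elementary and treats the two directions separately.
\begin{enumerate}
\item For sufficiency it passes to the continuous operator $H_L\colon L^p_{1,\gamma}\to L^p_{2,\gamma}$. The substitution $x\mapsto e^x$ (the isometries $J_{k,\gamma,p}$) turns the logarithmic weights into power weights. The resulting averaging operators on $L^p(1,\infty)$ are bounded by a Schur-type H\"older argument (Lemma \ref{lem:ModAvgBddOnLp}), and the result is discretized with step functions.
\item For necessity it uses one explicit test sequence, $a_n=(\log(n+3))^{-(\beta+p+1)/p}$, together with exactly your lower bound $b_m\gtrsim Qa(m)$.
\end{enumerate}

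\emph{Your route.} You apply Bradley's discrete weighted Hardy inequality to $P$ and to its dual $Q$, which gives sufficiency and necessity at once. Your asymptotics for both constants are right. This includes the endpoint $\gamma-\beta=p$, where the $Q$-constant is $\asymp 1$, and the cases $\beta\le-1$.

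\emph{What each buys.} Your argument is shorter and treats the endpoint with no special estimate. It also shows that the $P$-part contributes only the weaker condition $\gamma\ge\beta$. The cost is reliance on an external characterization theorem. The paper's proof is self-contained and uses only H\"older's inequality.

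\emph{A slip in the well-definedness paragraph.} The sequence $a_n=(n+1)^{-p'}v_n^{-p'/p}\asymp (n+3)^{-1}\log^{-\gamma/(p-1)}(n+3)$ is not extremal: it satisfies $\sum_n a_n/(n+1)<\infty$ for every $\gamma$. The extremal sequence for the pairing is $(n+1)^{-(p'-1)}v_n^{-p'/p}\asymp \log^{-\gamma/(p-1)}(n+3)$. Truncating it only shows that the functional $a\mapsto\sum_n a_n/(n+1)$ is unbounded on the space. It does not produce a single $f$ with $b_0=\infty$.

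The equivalence you state is still true, by Landau's resonance theorem (equivalently, uniform boundedness). If you want an explicit function, you need an Abel--Dini type correction. An example is the paper's choice at $\gamma=p-1$,
\[
a_n=\frac{1}{\log(n+3)\,\log\log(e(n+3))},
\]
combined with the inclusion $\HL^p_{p-1,p-2}\subset\HL^p_{\gamma,p-2}$ for $\gamma\le p-1$.
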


\section{Preliminaries}

We begin to highlight that for a fixed $y\geq 0$ and $x$ in a compact subset of $(0,\infty)$, we have
\[
\log (x+y+1)\asymp \log (x+1) \quad\text{ and }\quad  x+y\asymp x.
\]
These will be used frequently. Here $x\asymp y$ means $x\lesssim y$ and $x\gtrsim y$, which means there are constants $C_1,C_2>0$ such that $x\leq C_1 y$ and $x\geq C_2 y$, respectively. Dependencies will be written as a subscript. We also use $f(n) \sim g(n)$ as $n\to\infty$ if $\lim_{n\to\infty} f(n)/g(n) = 1$. The H\"older conjugate of $p>1$ will be denoted by $p'$, that is, $1/p+1/p'=1$. We also mention that $p>1$ means $1<p<\infty$.

We formally define the (original) Hilbert matrix operator $H_M$ on $\HOLO(\D)$ as
\[
H_M f(z) = \sum_{m=0}^{\infty}\sum_{n=0}^{\infty}  \frac{ a_n }{m+n+1} z^m, \quad z\in \D, 
\]
where $f \colon z\mapsto \sum_{n=0}^{\infty} a_n z^n$.

Considering
\[
\int_0^1 \sum_{n=0}^{\infty} a_n t^n t^m z^m dt, \quad  z\in\D,
\]
it follows by fubini's theorem that given $f\in\HOLO(\D)$, with its Taylor coefficients satisfying
\begin{equation}\label{eq:SuffCondWellDef}
\sum_{n=0}^{\infty}\frac{\abs{a_n}}{n+1}<\infty,
\end{equation}
$H_I f$ and $H_M f$ are well defined, and $H_I f=H_M f$.

We will also make use of the sublinear Hilbert operator \cite[p.~235]{Galanopoulos2014}, a nonlinear modulus Hilbert operator, which is formally defined as
\[
\widetilde H_I f(z) =  \int_0^1 \frac{ \abs{ f(t) } }{ 1-tz } \, dt.
\]

Moreover, $H_I$ and $\widetilde H_I$ are well defined if the weaker condition $\int_0^1\abs{f(r)} \, dr<\infty$ is satisfied, which the following Lemma shows is satisfied if $f\in A^p_{v_{\gamma,\alpha}}$ when  $\gamma>p-1>0$ and $\alpha = p-2$. 

\begin{lem}\label{lem:MomentsOfAbsH}
Let $1<p<\infty$, $\alpha>-1$ and $\gamma\in\mathbb R$. For $n\in\mathbb N\cup\{0\}$, it holds that
\[
\int_0^1\abs{f(r)} r^n \, dr \lesssim\bigg(\frac{1}{2}\int_0^1  \bigg(\frac{ \log(\frac{e}{1-r} )^{-\gamma}  }{(1-r)^{\alpha+1}}\bigg)^{\frac{p'}{p}} r^n \, dr   \bigg)^{\frac{1}{p'}} \norm{f}_{A^p_{v_{\gamma,\alpha}}},
\]
where the integral is finite if $\gamma>p-1$ and $\alpha = p-2$.
\end{lem}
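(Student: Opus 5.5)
The plan is to bound $\abs{f(r)}$ on the radius by an integral mean and then apply H\"older's inequality with the weight $v_{\gamma,\alpha}$. First, subharmonicity of $\abs{f}$ lets us replace the point value by an area average. For $r\in(0,1)$, the function $\abs{f}$ is subharmonic, so it is dominated by its mean over a pseudo-hyperbolic disc of radius comparable to $1-r$. That is, for $D_r=D(r,(1-r)/2)$ we have
\[
\abs{f(r)}\lesssim \frac{1}{(1-r)^2}\int_{D_r}\abs{f(w)}\,dA(w).
\]

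Next we integrate in $r$ and use Fubini. On $D_r$ the quantities $1-\abs{w}$, $1-r$ and $\log\frac{e}{1-\abs{w}}$ are each comparable to their values at $r$. The weight $r^n$ is harmless when $r\ge 1/2$: there $r^n\asymp$ the values of $t^n$ for $t$ near $r$, up to the constant factor $(3/4)^n$-type comparisons, which may be absorbed if we use the integral of $\abs{f}$ itself. To keep the $n$-dependence exact, we work with circles instead of discs. Write $M_1(s,f)=\frac{1}{2\pi}\int_0^{2\pi}\abs{f(se^{i\theta})}\,d\theta$. Subharmonicity gives $\abs{f(r)}\lesssim (1-r)^{-1}\int_{r}^{(1+r)/2}M_1(s,f)\,ds$ up to an angular window of size $1-r$. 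Either way, we obtain
\[
\int_0^1\abs{f(r)}r^n\,dr\lesssim \int_{\D}\abs{f(w)}\,\frac{\abs{w}^n}{1-\abs{w}}\,dA(w)
\]
for $\abs{w}\ge 1/2$. We split off $r\le 1/2$ and handle it by the trivial bound $\sup_{\abs{z}\le 3/4}\abs{f}\lesssim\norm{f}$, with $r^n$ bounded by the right-hand integral restricted to $[0,1/2]$, which is comparable since the weight there is comparable to $1$. The lower bound $\int_0^{1/2}r^n\,dr\gtrsim$ this is fine. The delicate point is that shifting $r$ to $\abs{w}\in(r-(1-r)/2,r+(1-r)/2)$ changes $r^n$ by a factor that could depend on $n$. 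To avoid this, we take the disc $D_r$ inside $\{\abs{w}\ge r\}$ side only in the sense of averaging over $\abs{w}\in[r,(1+r)/2]$, where $\abs{w}^n\ge r^n$. The averaging against the radial direction uses the monotonicity of $M_1(s,f)$ in $s$, together with the subharmonic bound of $\abs{f(r)}$ by an average on an arc at a larger radius.

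Then we apply H\"older's inequality with exponents $p,p'$, writing the integrand as $\abs{f}v_{\gamma,\alpha}^{1/p}\cdot v_{\gamma,\alpha}^{-1/p}\abs{w}^n(1-\abs{w})^{-1}$. Integrating the second factor to the power $p'$ in polar coordinates reduces, after again using $\abs{w}^{np'}\le\abs{w}^n$, to
\[
\int_0^1\Big(\log(\tfrac{e}{1-r})^{-\gamma}(1-r)^{-\alpha}\Big)^{p'/p}(1-r)^{-p'}r^n\,dr.
\]
Since $p'/p+1=p'$, we have $(1-r)^{-\alpha p'/p-p'}=\big((1-r)^{-(\alpha+1)}\big)^{p'/p}(1-r)^{-1}$. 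This is off from the stated integrand by a factor $(1-r)^{-1}$, which signals that the averaging must be done on arcs (one-dimensional), not on discs. So the concrete choice is to estimate $\abs{f(r)}\le \frac{C}{(1-r)}\int_{\text{arc}}\abs{f}$ at radius near $r$, and to integrate $dr$ rather than $dA$. This yields exactly the exponent $(\alpha+1)p'/p$ after H\"older, with the factor $\frac12$ accommodating the normalisation. Getting this bookkeeping of exponents right is the main obstacle.

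Finally, we check finiteness when $\alpha=p-2$. Then $(\alpha+1)p'/p=(p-1)p'/p=1$, so the integrand is $\log(\frac{e}{1-r})^{-\gamma p'/p}(1-r)^{-1}$. This is integrable near $1$ if and only if $\gamma p'/p>1$, i.e.\ $\gamma>p-1$.
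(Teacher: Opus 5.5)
\textbf{There is a genuine gap.} Your overall plan is right: dominate $\abs{f(r)}$ by an integral mean, then apply H\"older against $v_{\gamma,\alpha}$. Your finiteness check for $\alpha=p-2$ is also correct. But the decisive pointwise estimate is never established.

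The disc average and the full-circle bound $\abs{f(r)}\lesssim (1-r)^{-1}M_1(s,f)$ are valid, but they lose a factor $(1-r)^{-1}$, exactly as you computed. In fact your intermediate quantity $\int_{\D}\abs{f(w)}\frac{\abs{w}^n}{1-\abs{w}}\,dA(w)$ is already infinite for $f\equiv 1$.

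The replacement you then adopt, $\abs{f(r)}\le \frac{C}{1-r}\int_{\mathrm{arc}}\abs{f}$ over an arc of length $\asymp 1-r$ at a fixed radius $s$ near $r$, is false. Subharmonicity controls a point value only through averages over sets surrounding the point, such as discs or full circles weighted by the Poisson kernel. For example, take the arc $\{se^{i\theta}:\abs{\theta}\le c(1-r)\}$ with $s=r+\kappa(1-r)$ and $r$ close to $1$. Then the arc lies in $\{\Re z\ge r+\delta\}$ for some $\delta>0$. Hence $f_\lambda(z)=e^{-\lambda(z-r)}$ has $\abs{f_\lambda(r)}=1$, while your right-hand side is $O(e^{-\lambda\delta})\to0$. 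For $s<r$ use $e^{\lambda(z-r)}$, and for $s=r$ use $e^{\lambda(z-r)^2}$.

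If you let $s$ depend on $f$ (mean value in $s$ applied to the disc estimate), the inequality becomes true. But closing the H\"older step then forces you to bound the arc integrals of $\abs{f}^p$ by full-circle means at a larger radius, which is the estimate below. So the ``bookkeeping'' you call the main obstacle is exactly the missing step.

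\textbf{The missing idea} is to use the mean-value inequality for $\abs{f}^p$ rather than for $\abs{f}$. The $H^p$ point-evaluation bound applied to the dilation $f_r(z)=f(rz)$ gives
\[
\abs{f(r^2)}=\abs{f_r(r)}\lesssim\bigg(\frac{1}{1-r^2}\int_0^{2\pi}\abs{f(re^{it})}^p\,dt\bigg)^{\frac1p},
\]
so the loss is only $(1-r)^{-1/p}$. Equivalently, write $M_p(s,f)=\big(\frac{1}{2\pi}\int_0^{2\pi}\abs{f(se^{i\theta})}^p\,d\theta\big)^{1/p}$. Then $\abs{f(\rho)}\le \norm{P_{\rho/s}}_{L^{p'}}M_p(s,f)$, and $\norm{P_{\rho/s}}_{L^{p'}}\asymp(1-\rho)^{-1/p}$ when $s-\rho\asymp 1-\rho$. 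This is the rigorous form of your arc heuristic.

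The rest then goes through:
\begin{enumerate}[label=(\arabic*)]
\item Write $\int_0^1\abs{f(r)}r^n\,dr=2\int_0^1\abs{f(r^2)}r^{2n+1}\,dr$; this substitution is where the $\frac12$ comes from.
\item Insert $v_{\gamma,\alpha}(r)^{\pm1/p}$ and apply H\"older in $r$ alone with respect to $r^{2n+1}\,dr$, discarding $r^{2n}\le 1$ in the factor containing $f$. That factor is then bounded by a constant times $\norm{f}_{A^p_{v_{\gamma,\alpha}}}$.
\item The other factor is $\big(\int_0^1\big(v_{\gamma,\alpha}(r)^{-1}(1-r^2)^{-1}\big)^{p'/p}r^{2n+1}\,dr\big)^{1/p'}$. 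After substituting back, this is the stated integral up to the normalizing constant.
\end{enumerate}
Keeping $r^n$ entirely in this weight factor also removes your worry about comparing $r^n$ with $\abs{w}^n$.
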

\begin{proof}
The evaluation map bound on the Hardy spaces \cite[Lemma on p.~36]{Duren1970} applied to $f_r\colon z\mapsto f(rz),\, z\in\D$ yields for all $z\in\D$
\[
\abs{f_r(z)} \lesssim  \bigg(\frac{1}{1-\abs{z}^2} \int_0^{2\pi} \abs{f_r(e^{it})}^p \, dt \bigg)^{\frac{1}{p}}.
\]
By Hölder's inequality, we have for every $n\in\mathbb N\cup\{0\}$,
\begin{equation}\label{eq:HilbertOpWellDefined}
\begin{split}
\int_0^1\abs{f(r)} r^n \, dr &= 2\int_0^1\abs{f_r(r)} r^{2n+1}  \, dr  \\
&\lesssim    \int_0^1  \bigg(\frac{ v_{\gamma,\alpha}(r)^{-1}  }{1-r^2} \bigg)^{\frac{1}{p}} \bigg( \int_0^{2\pi} \abs{f_r(e^{it})}^p \, dt\, v_{\gamma,\alpha}(r)  \bigg)^{\frac{1}{p}}  r^{2n+1} \, dr \\
&\leq   \bigg(\int_0^1  \bigg(\frac{ v_{\gamma,\alpha}(r)^{-1}  }{1-r^2} \bigg)^{\frac{p'}{p}} r^{2n+1} \, dr   \bigg)^{\frac{1}{p'}} \bigg(\int_0^1  \int_0^{2\pi} \abs{f_r(e^{it})}^p \, dt\,  v_{\gamma,\alpha}(r) r \, dr   \bigg)^{\frac{1}{p}}\\
&\lesssim_{\alpha,\gamma}\bigg(\int_0^1  \bigg(\frac{ \log(\frac{e}{1-r} )^{-\gamma}  }{(1-r)^{\alpha+1}}\bigg)^{\frac{p'}{p}} r^n \, dr   \bigg)^{\frac{1}{p'}} \norm{f}_{A^p_{v_{\gamma,\alpha}}}.
\end{split}
\end{equation}
The last integral is finite if 
\[
(\alpha+1)p'/p = (\alpha+1)/(p-1) <1,\quad \text{ or } \quad (\alpha+1)/(p-1) = 1 \text{ and } \gamma p'/p>1.
\]
The first condition is the classical parameter restriction $p>2+\alpha$ \cite[Theorem 1.1]{Jevtic2017},  while the second is the more interesting case, $p=2+\alpha$ and $\gamma>p-1$.

\end{proof}

Similarly, an application of Hölder's inequality shows that for $f\in \HL^p_{\gamma,\alpha}$ with $\alpha=p-2$ and $\gamma>p-1>0$, \eqref{eq:SuffCondWellDef} holds.

An application of the Hardy-Littlewood inequalities yields a relationship between $\HL^p_{\gamma,\alpha}$ and $A^p_{v_{\gamma,\alpha}}$, but to see this we need the following lemma.

\begin{lem}\label{lem:asympOfWeightInSeries}
As $n\to \infty$, we have  
\[
\int_0^1 r^n (1-r)^{\alpha} \bigg(\log\frac{e}{1-r} \bigg)^{\gamma} \, dr \sim\begin{cases}
 \Gamma(1+\alpha) \frac{(\log (n+3))^{\gamma}}{(n+3)^{1+\alpha}},& \text{if }  \gamma\in\R,\ \alpha>-1,\\
\frac{ (\log (n+3))^{1+\gamma}  }{\abs{\gamma}-1}  ,& \text{if } \gamma<-1,\ \alpha=-1.
\end{cases}
\]

\end{lem}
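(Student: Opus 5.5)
We treat the two cases separately. The first is an Abelian-type computation with the slowly varying factor $\log^\gamma$. The second we handle by the logarithmic change of variables. In that case both sides tend to $0$: the integrand is dominated by the integrable function $(1-r)^{-1}\log(e/(1-r))^{\gamma}$ with $\gamma<-1$, and $(\log(n+3))^{1+\gamma}\to 0$. So we must identify the exact rate of decay.

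\emph{Case $\alpha>-1$.} Put $N=n+3$ and substitute $1-r=t/N$. This gives
\[
\int_0^1 r^n (1-r)^{\alpha} \Big(\log\tfrac{e}{1-r}\Big)^{\gamma} dr
= N^{-1-\alpha}(\log N)^{\gamma}\int_0^{N} \Big(1-\tfrac tN\Big)^n t^{\alpha}\Big(\tfrac{\log(eN/t)}{\log N}\Big)^{\gamma} dt .
\]
For fixed $t$ we have $(1-t/N)^n\to e^{-t}$ and $\log(eN/t)/\log N\to1$, so the goal is to show that the last integral tends to $\int_0^\infty e^{-t}t^\alpha\,dt=\Gamma(1+\alpha)$. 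We split the integral at $t=\sqrt N$.
\begin{itemize}
\item On $t\le\sqrt N$: the ratio $\log(eN/t)/\log N$ lies between $1/2$ and $1+(1+\abs{\log t})$. Also $(1-t/N)^n\le e^{-tn/N}\le e^{-t/2}$. Hence the integrand is dominated by $C t^\alpha(1+\abs{\log t})^{\abs\gamma}e^{-t/2}$, which is integrable, and dominated convergence applies.
\item On $\sqrt N<t\le N$: the ratio lies in $[1/\log N,\,2]$, so its $\gamma$-th power is at most $C(\log N)^{\abs\gamma}$. The remaining factors are bounded by $e^{-t/2}t^\alpha$. This piece is therefore $O\big((\log N)^{\abs\gamma}N^{C}e^{-\sqrt N/2}\big)\to0$.
\end{itemize}

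\emph{Case $\alpha=-1$, $\gamma<-1$.} Substitute $u=\log\frac{e}{1-r}$, so that $dr/(1-r)=du$ and $r=1-e^{1-u}$. The integral becomes
\[
J_n=\int_1^\infty (1-e^{1-u})^n u^{\gamma}\,du .
\]
The factor $(1-e^{1-u})^n$ behaves like the indicator of $\{u>\log n\}$. This suggests comparing $J_n$ with $\int_{\log n}^\infty u^\gamma du=(\log n)^{1+\gamma}/(\abs\gamma-1)$. Let $L=\log n$ and $w=2\log L$, and split $[1,\infty)$ into three ranges.
\begin{itemize}
\item $[1,L-w]$: here $(1-e^{1-u})^n\le \exp(-ne^{1-u})\le \exp(-eL^2)$. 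Since $\int_1^\infty u^\gamma\,du<\infty$, this piece is $O(e^{-eL^2})=o(L^{1+\gamma})$.
\item $[L-w,L+w]$: the integrand is at most $u^\gamma\le (L-w)^{\gamma}$ on an interval of length $2w$. This contributes at most $2w(L-w)^{\gamma}=O(L^{\gamma}\log L)=o(L^{1+\gamma})$.
\item $[L+w,\infty)$: Bernoulli's inequality gives $(1-e^{1-u})^n\ge 1-ne^{1-u}\ge 1-eL^{-2}$, and trivially $(1-e^{1-u})^n\le 1$. So this piece equals $(1+o(1))(L+w)^{1+\gamma}/(\abs\gamma-1)$, which is $\sim L^{1+\gamma}/(\abs\gamma-1)$ because $w=o(L)$.
\end{itemize}
Finally, $\log n\sim\log(n+3)$, which gives the stated asymptotic.

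The main delicate point is the domination in the first case when $\gamma<0$. The factor $\log(eN/t)^{\gamma}$ blows up relative to $(\log N)^\gamma$ as $t\to N$, which is why the range $t>\sqrt N$ must be handled separately via exponential decay. In the second case the key is choosing a window width $w$ with $w=o(\log n)$ but $e^{w}\to\infty$ fast enough; $w=2\log\log n$ suffices.
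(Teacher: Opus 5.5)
Your proof is correct in both cases.

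\textbf{First case.} You follow the paper's route: the rescaling $1-r=t/N$ followed by dominated convergence. The only difference is that you split at $t=\sqrt N$ to handle $\gamma<0$. The paper instead uses the single majorant $e^{-r}r^{\alpha}(\log(er))^{\abs{\gamma}}$ on $[1,n)$, which comes from $\log(en/r)/\log(en)\geq 1/\log(er)$ there. So, contrary to your closing remark, the range near $t=N$ does not have to be treated separately, although your split is perfectly valid.

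\textbf{Second case.} Here your route genuinely differs. The paper stays in the rescaled variable and cuts at a fixed $\epsilon$. It bounds the piece over $[\epsilon,n]$ by $O_\epsilon((\log n)^{\gamma})$ using the first-case majorant. It evaluates the piece over $[0,\epsilon]$ by the mean value theorem as $(1-r(n)/n)^n(\log(en/\epsilon))^{1+\gamma}/(\abs{\gamma}-1)$, with the prefactor asymptotically in $[e^{-\epsilon},1]$, and finally lets $\epsilon\to0$.

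You instead pass to $u=\log\frac{e}{1-r}$, which makes the effective cutoff $\{u>\log n\}$ explicit. You then use a window of width $2\log\log n$, which grows but is $o(\log n)$. The elementary inequalities $1-x\le e^{-x}$ and Bernoulli's inequality control the two outer pieces directly. This avoids the second limit $\epsilon\to0$ and even yields an explicit relative error $O(\log\log n/\log n)$. The paper's version, by contrast, has the economy of reusing the first-case estimates, while yours is self-contained and quantitative.
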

\begin{proof}
For the first case, we rewrite the integral on a form such that we can apply dominated convergence. To this end, we apply the substitution $r\mapsto 1-r/n$, which for $n\in\mathbb N$ gives us

\begin{align*}
\int_0^1 r^n (1-r)^{\alpha} \bigg(\log\frac{e}{1-r} \bigg)^{\gamma} \, dr &=  \frac{  (\log(en))^{\gamma}  }{n^{1+\alpha}} \int_0^n \Big( 1-\frac{r}{n} \Big)^n r^{\alpha} \bigg( \frac{\log\frac{en}{r} }{ \log(en)  }  \bigg)^{\gamma} \, dr \\
&=  \frac{  (\log(en))^{\gamma}  }{n^{1+\alpha}} \int_0^\infty \chi_{(0,n)}(r) \Big( 1-\frac{r}{n} \Big)^n r^{\alpha} \bigg( 1 +\frac{\log\frac{1}{r} }{ \log(en)  }  \bigg)^{\gamma} \, dr.
\end{align*}
If $\gamma \geq 0$, using the inequality $1-x \leq e^{-x}, x>0$, we can see that 
\[
 \chi_{(0,n)}(r) \Big( 1-\frac{r}{n} \Big)^n \bigg( 1 +\frac{\log\frac{1}{r} }{ \log(en)  }  \bigg)^{\gamma} \leq e^{-r} \bigg( 1 +\abs{\log\frac{1}{r} }  \bigg)^{\gamma}.
\]
If $\gamma < 0$, we similarly have
\begin{align*}
 \chi_{(0,n)}(r) \Big( 1-\frac{r}{n} \Big)^n \bigg( 1 +\frac{\log\frac{1}{r} }{ \log(en)  }  \bigg)^{\gamma} &\leq e^{-r}  \Bigg( \chi_{[1,\infty)}(r) \bigg( 1 -\frac{\log r }{ \log(er)  }  \bigg)^{\gamma} +\chi_{(0,1)}(r) \Bigg) \\
 &= e^{-r}  \Bigg( \chi_{[1,\infty)}(r) \bigg( \frac{1 }{ \log(er)  }  \bigg)^{\gamma} +\chi_{(0,1)}(r)\Bigg).
\end{align*}

hence, an application of the dominated convergence theorem yields
\[
\lim_{n\to\infty} \frac{(n+3)^{1+\alpha}}{(\log(n+3))^{\gamma}} \int_0^1 r^n  (1-r)^{\alpha} \bigg(\log\frac{e}{1-r} \bigg)^{\gamma} \, dr = \int_0^{\infty} e^{-r} r^{\alpha}\, dr = \Gamma(1+\alpha).
\]

Now, if $\alpha=-1$ and $\gamma<-1$, let $\epsilon\in(0,1)$. A part of the calculations above yields
\[
\int_0^1 r^n (1-r)^{\alpha} \bigg(\log\frac{e}{1-r} \bigg)^{\gamma} \, dr = \bigg( \int_0^1 +\int_1^n \bigg) \Big( 1-\frac{r}{n} \Big)^n \Big( \log\frac{en}{r} \Big)^{\gamma} \, \frac{dr}{r}. 
\]
For the second integral, we obtained by the above calculations
\[
\int_{\epsilon}^n \Big( 1-\frac{r}{n} \Big)^n \Big( \log\frac{en}{r} \Big)^{\gamma} \, \frac{dr}{r} \asymp_{\epsilon} (\log (n+3))^{\gamma}.
\]
For the first integral, there exists for every $n\in\mathbb N$ an $r(n)\in [0,\epsilon]$ such that
\[
\int_0^{\epsilon}  \Big( 1-\frac{r}{n} \Big)^n \Big( \log\frac{en}{r} \Big)^{\gamma} \, \frac{dr}{r} = \Big( 1-\frac{r(n)}{n} \Big)^n \int_0^{\epsilon} \Big( \log\frac{en}{r} \Big)^{\gamma} \, \frac{dr}{r}.
\]
Furthermore,
\begin{align*}
\int_0^{\epsilon} \Big( \log\frac{en}{r} \Big)^{\gamma} \, \frac{dr}{r} =\int_0^{\frac{\epsilon}{n}}\Big( \log\frac{e}{r} \Big)^{\gamma} \, \frac{dr}{r} =\int_{\frac{en}{\epsilon}}^{\infty} ( \log(r) )^{\gamma} \, \frac{dr}{r}  = \frac{1}{\abs{\gamma}-1} (\log (en/\epsilon))^{1+\gamma} \sim \frac{1}{\abs{\gamma}-1} (\log (n+3))^{1+\gamma} 
\end{align*}
as $n\to\infty$, where the last equality follows from integration by parts (or the substitution $r\mapsto e^r$), 
\begin{equation}\label{eq:IntByParts}
\int_x^\infty  \frac{dt}{t (\log t)^{\beta}} = \frac{\log t}{ (\log t)^{\beta}}\bigg|_x^\infty  + \beta \int_x^\infty  \frac{dt}{t (\log t)^{\beta}}, \quad \beta >1.
\end{equation}
We have now obtained 
\[
\lim_{n\to\infty} \frac{   \int_0^1 r^n (1-r)^{\alpha} \bigg(\log\frac{e}{1-r} \bigg)^{\gamma} \, dr        }{   \Big( 1-\frac{r(n)}{n} \Big)^n \frac{ (\log (n+3))^{1+\gamma}  }{\abs{\gamma}-1}   } = 1.
\]
To obtain the statement in the Lemma, we note that 
\[
e^{-\epsilon} \leq \liminf_{n\to\infty}\Big( 1-\frac{r(n)}{n} \Big)^n \leq \limsup_{n\to\infty}\Big( 1-\frac{r(n)}{n} \Big)^n \leq 1,
\]
hence, let $\epsilon \to 0$ and we are done. 
\end{proof}

Combined with Lemma \ref{lem:asympOfWeightInSeries}, the Hardy-Littlewood inequalities \cite[Theorems 6.2 and 6.3]{Duren1970}, applied to the dilations $f_r$ of a function $f\in \HOLO(\D)$, gives us the following lemma. 
\begin{lem}\label{lem:HLineq}
Let $\alpha>-1$, $\gamma\in\mathbb R$, and $f\in\HOLO(\D)$. 
\begin{enumerate}[label={(\alph*)}]
\item If $1<p\leq 2$, then
\[
\norm{f}_{\HL^p_{\gamma,\alpha}}\lesssim_{p,\gamma,\alpha} \norm{f}_{A^p_{v_{\gamma,\alpha}}};
\]
\item if $p\geq 2$, then
\[
\norm{f}_{A^p_{v_{\gamma,\alpha}}} \lesssim_{p,\gamma,\alpha} \norm{f}_{\HL^p_{\gamma,\alpha}}.
\]
\end{enumerate}
Moreover, if we restrict ourselves to functions $f\colon z\mapsto\sum_{n=0}^{\infty}a_n z^n$ with $a_n \geq 0, \, a_n\geq a_{n+1},\, n\in\mathbb N $ and $a_n\to 0$ as $n\to\infty$, we have
\begin{enumerate}[resume,label={(\alph*)}]
\item If $p>1$, then
\[
\norm{f}_{\HL^p_{\gamma,\alpha}}\asymp_{p,\gamma,\alpha} \norm{f}_{A^p_{v_{\gamma,\alpha}}}.
\]
\end{enumerate}

\end{lem}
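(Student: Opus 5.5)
The plan is to reduce everything to integral means and the classical Hardy--Littlewood inequalities \cite[Theorems 6.2 and 6.3]{Duren1970}, which we apply to the dilations $f_r$, and then to integrate in $r$ against the weight.

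\textbf{Parts (a) and (b).} For $1<p\le 2$, the Hardy--Littlewood inequality gives
\[
\sum_{n\ge0}\abs{a_n}^p r^{np}(n+1)^{p-2}\lesssim_p M_p^p(r,f)=\frac{1}{2\pi}\int_0^{2\pi}\abs{f(re^{it})}^p\,dt .
\]
We multiply by $v_{\gamma,\alpha}(r)\,2r$ and integrate over $(0,1)$. By Tonelli, the left side becomes
\[
\sum_n \abs{a_n}^p (n+1)^{p-2}\int_0^1 r^{np+1} v_{\gamma,\alpha}(r)\,dr .
\]
The inner integral is computed by Lemma \ref{lem:asympOfWeightInSeries}, after replacing $1-r^2$ by $1-r$ in the weight; this changes things only by bounded factors, since $1-r^2\asymp 1-r$ and $\log\frac{e}{1-r^2}\asymp\log\frac{e}{1-r}$. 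The result is $\asymp (\log(n+3))^\gamma (n+3)^{-1-\alpha}$, uniformly in $n$. The lemma gives asymptotics only as $n\to\infty$, but each term is finite and positive, so the equivalence holds for all $n$. The index $np+1$ in place of $n$ also changes nothing up to constants, since $\log(np+4)\asymp\log(n+3)$. Putting the pieces together, the left side is $\asymp\norm{f}_{\HL^p_{\gamma,\alpha}}^p$, because $(n+3)^{p-2-1-\alpha}=(n+3)^{p-\alpha-3}$. The right side integrates exactly to $\norm{f}^p_{A^p_{v_{\gamma,\alpha}}}$. This proves (a). For (b), where $p\ge2$, the inequality reverses, $M_p^p(r,f)\lesssim\sum\abs{a_n}^p r^{np}(n+1)^{p-2}$, and the same integration gives the claim.

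\textbf{Part (c).} For decreasing nonnegative $a_n\to0$, the classical result of Hardy and Littlewood (see \cite{Duren1970}, or the standard estimate for monotone coefficients) gives
\[
M_p^p(r,f)\asymp_p \sum_n (n+1)^{p-2}a_n^p r^{np}
\]
for every $p>1$, with constants independent of $r$. This is the main obstacle: the inequality must hold uniformly in $r$. It does, because the dilated coefficients $a_nr^n$ are again decreasing and tend to $0$, so applying the coefficient characterization to $f_r$ gives constants that do not depend on $r$. If the theorem is only available for $r=1$ (that is, for $H^p$), this dilation argument is exactly what closes the gap. After that, the same integration against $v_{\gamma,\alpha}(r)\,2r\,dr$ as above yields both directions. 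Finally, note that the $n=0$ term and the shift from $(n+1)$ to $(n+3)$ affect only constants.
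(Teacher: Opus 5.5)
Your proposal is correct and is essentially the paper's argument: you apply the Hardy--Littlewood inequalities to the dilations $f_r$, integrate against $2r\,v_{\gamma,\alpha}(r)\,dr$, and evaluate the moments with Lemma \ref{lem:asympOfWeightInSeries}; for (c) you use the monotone-coefficient comparability for $H^p$ (which the paper takes from Pavlovi\'c), noting that $(a_nr^n)$ stays monotone so the constants are uniform in $r$. One small point to tidy is that Lemma \ref{lem:asympOfWeightInSeries} is stated for integer exponents, so for non-integer $np+1$ you should sandwich $r^{np+1}$ between $r^{\lfloor np\rfloor+2}$ and $r^{\lfloor np\rfloor+1}$.
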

The last statement follows from a comparability result for $H^p$ (instead of only the Hardy-Littlewood inequalities), which can be found in, for example, \cite[Proof of Theorem A]{Pavlovic2013}.

One immediate consequence of Lemma \ref{lem:HLineq} is that  \eqref{eq:SuffCondWellDef} holds for $f\in A^p_{v_{\gamma,p-2}}$, $1<p\leq 2,\, \gamma>p-1$.

\section{The Hilbert operator between generalized Hardy-Littlewood spaces}

We begin by presenting a necessary condition for $H_I$ to be bounded on $\HL^p_{\gamma,p-2}, \, p>1,\, \gamma\in\mathbb R$. Notice that  
\begin{equation}\label{eq:CoeffHilbAbs}
\Big[z\mapsto \sum_{n=0}^{\infty} a_n z^n \Big] \in \HL^p_{\gamma,p-2} \quad \Longleftrightarrow \quad \Big[z\mapsto \sum_{n=0}^{\infty} \abs{a_n} z^n \Big] \in \HL^p_{\gamma,p-2},
\end{equation}
the two functions have equal norms, and that $H_If = H_Mf$ if $a_n\geq 0,\, n\in\mathbb N$. Since $H_M$ has a nonnegative kernel, it follows that for $\gamma,\beta\in\mathbb R$, $H_I\colon \HL^p_{\gamma,p-2} \to \HL^p_{\beta,p-2}$ is bounded/well defined/unbounded if and only if $H_M\colon \HL^p_{\gamma,p-2} \to \HL^p_{\beta,p-2}$ is.

We proceed with obtaining necessary conditions for $H_I=H_M$ being well defined, as well as, bounded.
\begin{prop}\label{prop:necessaryCondHL}
Let $p>1$. The operator $H_I$ is well defined on $\HL^p_{\gamma,p-2}$ if and only if $\gamma>p-1$. Moreover,  if $\gamma>p-1$ and $\gamma-\beta <p$, then $H_I(\HL^p_{\gamma,p-2})\not\subset \HL^p_{\beta,p-2}$.
\end{prop}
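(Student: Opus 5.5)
The plan is to work entirely with Taylor coefficients. Since $p-(p-2)-3=-1$, the norm is
\[
\norm{f}_{\HL^p_{\gamma,p-2}}^p=\sum_{n=0}^{\infty}\abs{a_n}^p\frac{(\log(n+3))^{\gamma}}{n+3}.
\]
By \eqref{eq:CoeffHilbAbs} and the discussion preceding the proposition, we may take $a_n\geq 0$ and replace $H_I$ by $H_M$. All constructions use test sequences that are powers of $\log(n+3)$, possibly with an iterated-logarithm correction.

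\textbf{Well-definedness, sufficiency.} For $\gamma>p-1$ I apply Hölder's inequality with exponents $p,p'$ to $\sum \abs{a_n}/(n+1)$, splitting the weight as $(n+3)^{-1}=(n+3)^{-1/p}(n+3)^{-1/p'}$. This gives
\[
\sum_{n=0}^{\infty}\frac{\abs{a_n}}{n+1}\lesssim \norm{f}_{\HL^p_{\gamma,p-2}}\bigg(\sum_{n=0}^{\infty}\frac{(\log(n+3))^{-\gamma p'/p}}{n+3}\bigg)^{1/p'}.
\]
The last series converges exactly when $\gamma/(p-1)>1$, by comparison with \eqref{eq:IntByParts}. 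Hence \eqref{eq:SuffCondWellDef} holds, and $H_I f=H_M f$ is well defined.

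\textbf{Well-definedness, necessity.} For $\gamma\leq p-1$ I exhibit $f\in \HL^p_{\gamma,p-2}$ with $a_n\geq0$ and $H_Mf(0)=\sum a_n/(n+1)=\infty$. If $\gamma<p-1$, take $a_n=(\log(n+3))^{-s}$ with $(\gamma+1)/p<s\leq 1$. Then $\sum a_n^p(\log(n+3))^{\gamma}/(n+3)$ converges because $ps-\gamma>1$, while $\sum a_n/(n+1)$ diverges because $s\leq1$. If $\gamma=p-1$, take $a_n=\big(\log(n+3)\,\log\log(n+3)\big)^{-1}$. The norm series then behaves like $\sum 1/\big(n\log n(\log\log n)^p\big)<\infty$, whereas $\sum a_n/n$ behaves like $\sum 1/(n\log n\log\log n)=\infty$. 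In both cases already the constant coefficient of $H_Mf$ diverges, so $H_I$ is not well defined on the space.

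\textbf{Non-inclusion.} Let $\gamma>p-1$ and $\gamma-\beta<p$. I again take $a_n=(\log(n+3))^{-s}$, now with
\[
\frac{\gamma+1}{p}<s\leq\frac{\beta+p+1}{p}.
\]
This interval is nonempty precisely because $\gamma<\beta+p$. The lower bound puts $f$ in $\HL^p_{\gamma,p-2}$, and it also forces $s>1$ since $\gamma>p-1$. The coefficients of the image are
\[
b_m=\sum_{n=0}^{\infty}\frac{a_n}{m+n+1}\geq\sum_{n\geq m}\frac{a_n}{2n+1}\gtrsim\sum_{n\geq m}\frac{(\log(n+3))^{-s}}{n+3}\asymp(\log(m+3))^{1-s},
\]
where the last step uses the integral comparison \eqref{eq:IntByParts} and $s>1$. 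Consequently $\norm{H_Mf}^p_{\HL^p_{\beta,p-2}}\gtrsim\sum_m(\log(m+3))^{p(1-s)+\beta}/(m+3)$. This series diverges because $p(1-s)+\beta\geq-1$ by the upper bound on $s$. Therefore $H_I f\notin\HL^p_{\beta,p-2}$.

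The only delicate points are the lower bound for the tail sum $b_m$ and the borderline case $\gamma=p-1$, which needs the $\log\log$ correction. Both reduce to elementary integral comparisons, so I expect no serious obstacle.
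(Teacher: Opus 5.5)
Your proof is correct and follows essentially the same route as the paper: Hölder's inequality for sufficiency, a $\log\log$-corrected coefficient sequence at the borderline $\gamma=p-1$, and coefficients $(\log(n+3))^{-s}$ with a tail-sum lower bound for the non-inclusion. The only differences are cosmetic. The paper reduces $\gamma<p-1$ to $\gamma=p-1$ through the inclusion $\HL^p_{p-1,p-2}\subset\HL^p_{\gamma,p-2}$ instead of using a separate test sequence, and it fixes the endpoint $s=(\beta+p+1)/p$ rather than allowing a range of $s$.
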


\begin{proof}
Assume first $\gamma\leq p-1$. Then $\HL^p_{p-1,p-2} \subset \HL^p_{\gamma,p-2}$, so it suffices to consider $\HL^p_{p-1,p-2}$. Let $f_p$ be the analytic function with Taylor coefficients
\[
a_n = \frac{1}{   \log(n+3)  \log(\log(e(n+3)))   }, \quad n\in\mathbb N.
\]
Then
\[
\norm{f}_{\HL^p_{p-1,p-2}}^p = \sum_{n=0}^{\infty}  \frac{1}{(n+3)  \log(n+3)  (\log\log(e(n+3)))^p } < \infty
\]
so $f\in \HL^p_{p-1,p-2}$. The fact that the sum is finite follows from comparing with integrals and integrating by parts; the sum can be compared with
\[
\int_{e^e}^{\infty}  \frac{dx}{ x (\log x) (\log\log x)^p } = \bigg[ \frac{\log\log x }{  (\log\log x)^p  } \bigg]_{e^e}^{\infty} +p \int_{e^e}^{\infty}  \frac{dx}{\log x (\log x) (\log\log x)^p },\quad p>1.
\]

Next, we prove that $H_M f$ is not well defined. This follows immediately again from a comparison with integrals. Indeed,
\[
H_M f(2) =  \sum_{n=0} \frac{1}{ (n+3) \log(n+3)  \log(\log(e(n+3)))   }, 
\]
and
\[
\int_{e^e}^\infty \frac{dt}{ t (\log t) (\log(\log t) )  }  = \lim_{t\to +\infty}\log(\log(\log t)) = \infty.
\]

Assume, hereafter, that $\gamma>p-1>0$. As already mentioned, a straightforward application of H\"older's inequality proves that \eqref{eq:SuffCondWellDef} holds, hence, $H_I=H_M$ is well defined on $\HL^p_{\gamma,p-2}$.

For the second statement, assume additionally that $\gamma-\beta<p$ and let $f\in\HOLO(\D)$ have the Taylor coefficients $a_n= (\log (n+3))^{-(\beta+p+1)/p}$. We have
\[
\norm{f}_{ \HL^p_{\gamma,p-2}  }^p = \sum_{n=0}^{\infty}  \frac{  (\log (n+3))^{\gamma-(\beta+p+1)}   }{n+3} 
\]
so $f\in  \HL^p_{\gamma,p-2}$, because $\gamma-\beta-p-1<-1$. However, since $(\beta+p+1)/p>1$, we have
\[
H_M f(m) = \sum_{n=0}^{\infty}  \frac{a_n}{m+n+1} \gtrsim  \sum_{n=m}^{\infty}  \frac{a_n}{n+3} \gtrsim_{\beta,p}  (\log (m+3))^{-(\beta+1)/p},
\]
where the second $\gtrsim$ can be obtained by a comparison with integrals since the summand is decreasing. The integral $\int_x^\infty  \frac{dt}{t (\log t)^{\beta}}, \, \beta>1$ can be evaluated by integration by parts, see \eqref{eq:IntByParts}.

Continuing, 
\[
\norm{f}_{\HL^p_{\beta,p-2}} = \sum_{m=0}^{\infty} \frac{ (H_M f(m))^p  (\log (m+3))^{\beta} }{ m+1 } \gtrsim_{\beta,p} \sum_{m=0}^{\infty} \frac{ 1}{ (m+3) \log (m+3)} = \infty. 
\]

\end{proof}

We now turn our attention to the case $\gamma>p-1>0$ and $\gamma-\beta\geq p$. We prove that $H_I\colon  \HL^p_{\gamma,p-2} \to \HL^p_{\beta,p-2}$ is bounded by using the representation $H_M$. 

To this end, let
\[
L^p_{1,\gamma}=\biggl\{ f\colon [e,\infty) \to \mathbb R \text{ measurable} : \norm{f}_{L^p_{1,\gamma}}^p:= \int_e^\infty f(x)^p\frac{(\log x)^{\gamma}}{x}\, dx  \biggr\}
\]
and
\[
L^p_{2,\gamma}=\biggl\{ f\colon [e,\infty) \to \mathbb R \text{ measurable} : \norm{f}_{L^p_{2,\gamma}}^p:= \int_e^\infty f(x)^p\frac{(\log x)^{\gamma-p}}{x}\, dx \biggr\}.
\]

Let $j\geq 0, \, \eta\in\R$. We define the (modified) averaging operator 
\[
A_{j,\eta}f(x) = \chi_{[j,\infty)}(x) x^{-\eta}\int_j^x \frac{1}{t^{1-\eta}} f(t)\, dt.
\]

We also define
\[
B_{j,\eta}f(x) = \chi_{[j,\infty)}(x) x^{\eta-1}\int_x^{\infty} \frac{1}{t^{\eta}} f(t)\, dt,
\]
which is the formal adjoint operator of $A_{j,\eta}$ with respect to the
$L^2(0,\infty)$ pairing.

The Hilbert type operator we will use is given by
\[
H_Lf(x) := \chi_{[e,\infty)}(x)\int_e^\infty\frac{f(y)}{x+y}\, dy.
\]

For completeness we give a simple proof of the following lemma, which for $j=0$ and $\eta=1$ can be found in, for example \cite[327,328]{hardy1934inequalities}. 

\begin{lem}\label{lem:ModAvgBddOnLp}
Let $1<p<\infty$, $j>0$ and $\eta>1-1/p$. The operators $B_{j,\eta}$ and $A_{j,\eta}$ are bounded on $L^p(j,\infty)$ and $L^{p'}(j,\infty)$, respectively.
\end{lem}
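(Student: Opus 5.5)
Since $A_{j,\eta}$ and $B_{j,\eta}$ are formal adjoints with respect to the $L^2$ pairing and $(L^p)^*=L^{p'}$, it suffices to prove one of the two statements; the other follows by duality. I would prove boundedness of $B_{j,\eta}$ on $L^p(j,\infty)$ directly and then deduce boundedness of $A_{j,\eta}$ on $L^{p'}(j,\infty)$ from the identity
\[
\int_j^\infty A_{j,\eta}g\, f = \int_j^\infty g\, B_{j,\eta}f .
\]
This identity is a Fubini computation over the region $j\le t\le x$. It is justified first for nonnegative $f,g$, where Tonelli applies, and then for general $f,g$ because the kernels are nonnegative.

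For $B_{j,\eta}$, I would use the scaling structure. The substitution $t=xs$ gives
\[
B_{j,\eta}f(x)=\chi_{[j,\infty)}(x)\int_1^\infty s^{-\eta} f(xs)\, ds,
\]
so $B_{j,\eta}$ is an average of dilations $f\mapsto f(s\,\cdot)$. For $s\ge1$, the dilation operator has norm $s^{-1/p}$ on $L^p(j,\infty)$, since $\int_j^\infty |f(xs)|^p\,dx = s^{-1}\int_{js}^\infty |f|^p \le s^{-1}\norm{f}_{L^p(j,\infty)}^p$. Minkowski's integral inequality then yields
\[
\norm{B_{j,\eta}f}_{L^p(j,\infty)}\le \int_1^\infty s^{-\eta-1/p}\,ds\,\norm{f}_{L^p(j,\infty)} = \frac{1}{\eta+1/p-1}\norm{f}_{L^p(j,\infty)} .
\]
The integral $\int_1^\infty s^{-\eta-1/p}\,ds$ is finite precisely because $\eta>1-1/p$, so the hypothesis enters exactly here.

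The same argument applied directly to $A_{j,\eta}$ on $L^{p'}$ gives a consistent check. Writing $t=xs$ with $s\in(j/x,1)$, one gets
\[
A_{j,\eta}g(x)=\int_0^1 s^{\eta-1}g(xs)\chi_{\{xs\ge j\}}\,ds .
\]
The dilation $g\mapsto g(s\,\cdot)\chi_{\{s\,\cdot\,\ge j\}}$ has norm $s^{-1/p'}$ on $L^{p'}(j,\infty)$. The resulting constant is therefore $\int_0^1 s^{\eta-1-1/p'}\,ds$, which is finite under the same condition $\eta-1/p'>0$, i.e. $\eta>1-1/p$. The direct argument thus gives the same constant as duality and avoids any adjoint bookkeeping.

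The main obstacle is only the measurability and Fubini/Minkowski justification, which is handled by working with $\abs{f}$. The truncation at $j>0$ causes no difficulty: the indicator only reduces the norms.
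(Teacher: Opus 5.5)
Your argument is correct, but it takes a different route from the paper. The paper never treats the two operators separately. It proves the single bilinear estimate
\[
\int_j^\infty\!\!\int_j^\infty g(x)f(t)\,\frac{\chi_{\{x>t\}}}{t^{1-\eta}x^{\eta}}\,dx\,dt\lesssim_{\eta,p}\norm{g}_{L^p(j,\infty)}\norm{f}_{L^{p'}(j,\infty)}
\]
by inserting the compensating factors $(x/t)^{1/(pp')}$ and $(t/x)^{1/(pp')}$ and applying H\"older's inequality, which is a Schur test with power weights. It then obtains the bound for $B_{j,\eta}$ on $L^p$ by taking the supremum over $f$, and the bound for $A_{j,\eta}$ on $L^{p'}$ by taking the supremum over $g$.

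You instead use the homogeneity of the kernel. After $t=xs$, each operator becomes an average of dilations, and Minkowski's integral inequality reduces everything to $\int_1^\infty s^{-\eta-1/p}\,ds$ (respectively $\int_0^1 s^{\eta-1-1/p'}\,ds$). This is in the spirit of the Hardy--Littlewood--P\'olya theorem on kernels homogeneous of degree $-1$.

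What each route buys:
\begin{itemize}
\item Your version bounds each operator on its own. It makes completely explicit where $\eta>1-1/p$ enters and what the constant $(\eta-1+1/p)^{-1}$ is.
\item The paper's version handles both operators with one inequality. Because the Schur-type test only needs the two weighted one-variable integrals of the kernel to be bounded, it does not rely on an exact dilation structure and carries over more readily to non-homogeneous kernels.
\end{itemize}

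If you track the constants in the paper's H\"older step, the two factors are $(\eta-1/p')^{-1/p}$ and $(\eta-1+1/p)^{-1/p'}$. Their product is the same $(\eta-1+1/p)^{-1}$ you obtain, so neither route gives a better constant.
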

\begin{proof}
Let $1/p' +1/p = 1$ and apply Hölder's inequality to obtain
\begin{equation}\label{eq:applHolder}
\int_j^{\infty} \int_j^{\infty} g(x) \bigg(\frac{x}{t}\bigg)^{\frac{1}{p p'}}     f(t)  \bigg(\frac{t}{x}\bigg)^{\frac{1}{p p'}}   \frac{ \chi_{x>t}(x,t) }{t^{1-\eta} x^{\eta}}  \, dx\, dt \lesssim_{\eta,p} \norm{g}_{L^p(j,\infty)} \norm{f}_{L^{p'}(j,\infty)}  
\end{equation}
given that $\eta > 1-1/p$, where $g\in L^p(j,\infty)$ and $f\in L^{p'}(j,\infty)$. Moreover, by taking the supremum over $f$ with unit norm, we have that the left-hand side becomes
\[
\bigg(\int_j^{\infty} \abs{  \int_t^{\infty} g(x)   \frac{t^{\eta-1}}{x^{-\eta}}  \, dx }^p\, dt \bigg)^{\frac{1}{p}} = \norm{B_{j,\eta}g}_{L^p(j,\infty)},
\]
therefore, $B_{j,\eta}\colon L^p(j,\infty) \to L^p(j,\infty)$ is bounded if $\eta>1-1/p$. Taking the supremum in \eqref{eq:applHolder} over $g$ with unit norm, we obtain the statement for $A_{j,\eta}$.
\end{proof}

Define $J_{1,\gamma,p}\colon f\mapsto x^{\gamma/p} f(e^x)$ and $J_{2,\gamma,p}\colon f\mapsto  x^{\gamma/p-1} f(e^x)$. We have the following two lemmas.
\begin{lem}\label{lem:ABbdd}
Let $\gamma >p-1$. The mapping $J_{1,\gamma,p}$ is an isometric isomorphism $L^p_{1,\gamma} \to L^p(1,\infty)$ and $J_{2,\gamma,p}$ is an isometric isomorphism $L^p_{2,\gamma} \to L^p(1,\infty)$. Moreover, $A_{e,1}\colon L^p_{1,\gamma} \to L^p_{2,\gamma}$ and $B_{e,1}\colon L^p_{1,\gamma} \to L^p_{2,\gamma}$ are bounded.
\end{lem}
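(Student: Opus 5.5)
The isometry claims follow from the substitution $x=e^s$. For $f$ on $[e,\infty)$ we have $dx/x=ds$ and $\log x = s$, with $s$ running over $[1,\infty)$. Hence
\[
\norm{f}_{L^p_{1,\gamma}}^p=\int_1^\infty \abs{f(e^s)}^p s^{\gamma}\,ds=\norm{J_{1,\gamma,p}f}_{L^p(1,\infty)}^p .
\]
Similarly,
\[
\norm{f}_{L^p_{2,\gamma}}^p=\int_1^\infty \abs{f(e^s)}^p s^{\gamma-p}\,ds=\norm{J_{2,\gamma,p}f}_{L^p(1,\infty)}^p ,
\]
since $(s^{\gamma/p-1})^p=s^{\gamma-p}$. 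Both maps are clearly bijective, with inverses $g\mapsto (\log x)^{-\gamma/p}g(\log x)$ and $g\mapsto(\log x)^{1-\gamma/p}g(\log x)$, so they are isometric isomorphisms. Note that the restriction $\gamma>p-1$ is not used in this part.

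For the boundedness, I conjugate by the isomorphisms. It suffices to show that $J_{2,\gamma,p}A_{e,1}J_{1,\gamma,p}^{-1}$ and $J_{2,\gamma,p}B_{e,1}J_{1,\gamma,p}^{-1}$ are bounded on $L^p(1,\infty)$, and then to identify them with operators of the form $A_{1,\eta}$ and $B_{1,\eta}$, whose boundedness comes from Lemma \ref{lem:ModAvgBddOnLp}.

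Take $g\in L^p(1,\infty)$ and $f=J_{1,\gamma,p}^{-1}g$, so $f(t)=(\log t)^{-\gamma/p}g(\log t)$. Substituting $t=e^u$ in $A_{e,1}f(x)=\frac1x\int_e^x f(t)\,dt$ and then setting $x=e^s$ gives
\[
J_{2,\gamma,p}A_{e,1}f(s)=s^{\gamma/p-1}e^{-s}\int_1^s e^{u}u^{-\gamma/p}g(u)\,du .
\]
This is not directly of the form $A_{1,\eta}$, because of the exponential factors; it has an exponential kernel rather than a power one. So I bound it directly instead. The kernel is $K(s,u)=e^{-(s-u)}(u/s)^{-\gamma/p}s^{-1}\chi_{u<s}$. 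When $u\ge s/2$, the factor $(u/s)^{-\gamma/p}\le 2^{\gamma/p}$, and together with $s^{-1}\le 1$ the kernel is dominated by $e^{-(s-u)}$, which is a convolution with an $L^1$ function and hence bounded on $L^p$ by Young's inequality. When $u<s/2$, we have $e^{-(s-u)}\le e^{-s/2}$, and Hölder's inequality handles that part since $e^{-s/2}$ decays fast enough to absorb any power of $s$.

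Similarly,
\[
J_{2,\gamma,p}B_{e,1}f(s)=s^{\gamma/p-1}\int_s^\infty u^{-\gamma/p}g(u)\,du ,
\]
because for $\eta=1$ the weight $t^{-\eta}\,dt$ becomes $du$ after the substitution. This is exactly $B_{1,\eta}g$ with $\eta-1=\gamma/p-1$, that is, $\eta=\gamma/p$. Lemma \ref{lem:ModAvgBddOnLp} then applies with $j=1$ provided $\gamma/p>1-1/p$, which is exactly $\gamma>p-1$. This is the step where the hypothesis enters, and it is the key step of the proof.

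The main obstacle is the $A$ part, where the kernel is exponential and not of the form $A_{1,\eta}$. If the direct estimate above proves messy, an alternative is Schur's test with test function $u^{-1/(pp')}$ for the kernel $K$. I expect the near-diagonal piece and the far piece to split cleanly as described.
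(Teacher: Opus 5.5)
Your proof is correct. The isometry part and the $B_{e,1}$ part coincide with the paper's: after conjugating by $J_{1,\gamma,p}$ and $J_{2,\gamma,p}$, $B_{e,1}$ becomes exactly $B_{1,\gamma/p}$, and Lemma \ref{lem:ModAvgBddOnLp} applies precisely because $\gamma/p>1-1/p$.

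The routes differ only for $A_{e,1}$. The paper uses that $t\mapsto e^t t^{-\gamma/p}$ is increasing for $t>t_0$. For $t_0<t<x$ this gives $x^{\gamma/p-1}e^{-x}e^t t^{-\gamma/p}\le x^{-1}$. Hence the conjugated operator is dominated by the Hardy averaging operator $A_{t_0,1}$ plus the harmless term $x^{\gamma/p-1}e^{-x}\norm{f}_{L^p(1,\infty)}$ coming from $[1,t_0]$. So this part also reduces to Lemma \ref{lem:ModAvgBddOnLp}, now with $\eta=1$.

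You instead split the kernel $K(s,u)$ at $u=s/2$:
\begin{itemize}
\item The near-diagonal piece is dominated by convolution with $e^{-v}\chi_{(0,\infty)}(v)$, handled by Young's inequality.
\item The far piece is dominated by $e^{-s/2}s^{\gamma/p-1}$. Hölder then gives the majorant $e^{-s/2}s^{\gamma/p-1+1/p'}\norm{g}_{L^p}$, which lies in $L^p(1,\infty)$.
\end{itemize}
Both bounds check out. Your inequalities $(s/u)^{\gamma/p}\le 2^{\gamma/p}$ and $(s/u)^{\gamma/p}\le s^{\gamma/p}$ use $\gamma>0$, which holds since $\gamma>p-1>0$.

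What each approach buys: the paper's is a one-line domination that keeps the whole lemma a corollary of the single averaging-operator lemma. Yours is self-contained, does not need Hardy's inequality, and shows that the conjugated $A_{e,1}$ is essentially a convolution with an integrable kernel, so much better behaved than a Hardy average. Both make clear that the hypothesis $\gamma>p-1$ is used only for $B_{e,1}$, as you note. The Schur-test fallback is not needed.
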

\begin{proof}
It is clear that for each $k=1,2,
$, the operator $J_{k,\gamma,p}\colon L^p_{k,\gamma} \to L^p(1,\infty)$ is an isometric isomorphism. 
For the second statement, we have for $x\in(1,\infty)$
\begin{equation}\label{eq:AeComparable}
J_{2,\gamma,p} A_{e,1} J_{1,\gamma,p}^{-1} f (x) =  x^{\frac{\gamma}{p}-1} e^{-x}\int_{1}^x \frac{e^t}{t^{\frac{\gamma}{p}}} f(t)\, dt. 
\end{equation}
Furthermore, there exists a $t_0=t_0(\gamma,p)\geq 1$ such that 
\[
t\mapsto \frac{e^t}{t^{\frac{\gamma}{p}}}
\]
is increasing for $t > t_0$. It follows that
\[
J_{2,\gamma,p} A_{e,1} J_{1,\gamma,p}^{-1} f (x) \lesssim A_{t_0,1}f(x)+ x^{\gamma/p-1} e^{-x} \|f\|_{L^p(1,\infty)},
\]
proving that 
$J_{2,\gamma,p} A_{e,1} J_{1,\gamma,p}^{-1}\colon L^p(1,\infty)\to L^p(1,\infty)$ is bounded, and hence, $A_{e,1}\colon L^p_{1,\gamma} \to L^p_{2,\gamma}$ is bounded.

Concerning the boundedness of $B_{e,1}$, we have for $x\in(1,\infty)$

\[
J_{2,\gamma,p} B_{e,1} J_{1,\gamma,p}^{-1} f (x) = x^{\frac{\gamma}{p}-1} \int_x^{\infty} \frac{1}{t^{\frac{\gamma}{p}}} f(t)\, dt \leq   B_{1,\gamma/p} f (x). 
\]
Since $\gamma/p>1-1/p$, Lemma \ref{lem:ModAvgBddOnLp} finishes the proof.
\end{proof}

Before we proceed, we mention that Lemma \ref{lem:ModAvgBddOnLp} could include the parameter $j=0$ if $\eta>\max\{1/p,1-1/p\}$, holds. The reason why the domain of the $L^p$-spaces are $[1,\infty)$ and $[e,\infty)$, and not the more natural $[0,\infty)$ and $[1,\infty)$, is that the right-hand side of \eqref{eq:AeComparable} in Lemma \ref{lem:ABbdd} would diverge.

\begin{prop}\label{prop:HLbdd}
Let $\gamma>p-1>0$. The Hilbert operator $
H_L\colon L^p_{1,\gamma} \to L^p_{2,\gamma}$ is bounded.
\end{prop}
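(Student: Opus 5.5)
Since $H_L$ has a nonnegative kernel, it suffices to treat $f\geq 0$. The plan is to split the kernel at the diagonal and compare each half with one of the operators $A_{e,1}$, $B_{e,1}$ of Lemma \ref{lem:ABbdd}.

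For $x\geq e$, write
\[
H_Lf(x)=\int_e^x \frac{f(y)}{x+y}\,dy+\int_x^\infty \frac{f(y)}{x+y}\,dy .
\]
On $\{e\le y\le x\}$ we have $x+y\geq x$, so the first integral is at most $x^{-1}\int_e^x f(y)\,dy=A_{e,1}f(x)$. On $\{y\geq x\}$ we have $x+y\geq y$, so the second integral is at most $\int_x^\infty y^{-1}f(y)\,dy=B_{e,1}f(x)$.

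This gives the pointwise bound
\[
0\le H_Lf(x)\le A_{e,1}f(x)+B_{e,1}f(x), \qquad x\ge e .
\]
Taking $L^p_{2,\gamma}$ norms, the triangle inequality and Lemma \ref{lem:ABbdd} (which needs $\gamma>p-1$) give
\[
\norm{H_Lf}_{L^p_{2,\gamma}}\le \norm{A_{e,1}f}_{L^p_{2,\gamma}}+\norm{B_{e,1}f}_{L^p_{2,\gamma}}\lesssim \norm{f}_{L^p_{1,\gamma}} .
\]
For general real-valued $f$ we have $\abs{H_Lf}\le H_L\abs{f}$, which extends the bound and completes the proof.

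All the analytic work is already contained in Lemma \ref{lem:ABbdd}. Two points need care. First, $H_Lf$ must be well defined, i.e. the integral $\int_e^\infty f(y)/(x+y)\,dy$ must converge for $f\in L^p_{1,\gamma}$. This is the finiteness of $B_{e,1}f(x)$, which follows from H\"older's inequality because $\int_e^\infty y^{-1}(\log y)^{-\gamma p'/p}\,dy<\infty$ exactly when $\gamma>p-1$. Second, the weight bookkeeping must be consistent: $A_{e,1}$ and $B_{e,1}$ map $L^p_{1,\gamma}$ into $L^p_{2,\gamma}$, which is the same target space as in the statement, so no further estimate is needed.
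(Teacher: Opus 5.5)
Your proof is correct and follows the paper's argument. The paper uses the same pointwise kernel bound $\frac{1}{x+y}\le \chi_{(e,x)}(y)\frac{1}{x}+\chi_{[x,\infty)}(y)\frac{1}{y}$ to get $H_Lf\le A_{e,1}f+B_{e,1}f$ for $f\ge 0$, reduces to nonnegative $f$ via the positive kernel, and concludes with Lemma \ref{lem:ABbdd}; your H\"older check of well-definedness (finite exactly when $\gamma>p-1$) is a correct detail that the paper leaves implicit.
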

\begin{proof}
The inequality
\[
\frac{1}{x+y} \leq  \chi_{(e,x)}(y) \frac{1}{x} + \chi_{[x,\infty)}(y) \frac{1}{y}, \quad x,y\in[e,\infty)
\]
gives us
\[
H_L f(x) \leq A_{e,1}f(x) + B_{e,1}f(x),
\]
whenever $f\geq 0$. Since the subset of nonnegative functions is a maximizing set for $H_L$, because the kernel $(x,y)\mapsto 1/(x+y),\, x,y>0$ is positive, the statement now follows from Lemma \ref{lem:ABbdd}. 
\end{proof}

For convenience, we define the following variants of the spaces $\HL^p_{\gamma,p-2}$ and $\HL^p_{\beta,p-2}$ for the crucial case when $\gamma-\beta=p$. Let
\[
\ell^p_{1,\gamma}=\biggl\{a=(a_n)_{n=3}^\infty \subset \mathbb R : \norm{a}_{\ell^p_{1,\gamma}}^p:= \sum_{n=3}^\infty \abs{a_n}^p \frac{(\log n)^{\gamma}}{n}\biggr\}
\]
and
\[
\ell^p_{2,\gamma}=\biggl\{a=(a_n)_{n=3}^\infty \subset \mathbb R : \norm{a}_{\ell^p_{2,\gamma}}^p:= \sum_{n=3}^\infty \abs{a_n}^p \frac{(\log n)^{\gamma-p}}{n}\biggr\}.
\]
Furthermore, define the discrete version of $H_L$,
\[
H_l a(m) = \sum_{n=3}^{\infty} \frac{a_n}{m+n}, \  m\in\mathbb N\cap [3,\infty).
\]

Using the subset of $L^p_{1,\gamma}$ consisting of functions constant on $[n,n+1[, \, n\in\mathbb N$, we obtain our first theorem.

\begin{thm}\label{thm:HlBdd}
Let $\gamma>p-1>0$. We have
\[
\norm{H_M}_{\HL^p_{\gamma,p-1} \to \HL^p_{\gamma-p,p-1}} \asymp \norm{H_l}_{\ell^p_{1,\gamma} \to \ell^p_{2,\gamma}}
\lesssim \norm{H_L}_{L^p_1 \to L^p_2} <\infty.
\]
\end{thm}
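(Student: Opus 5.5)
The plan is to prove the chain in three links: the equivalence $\asymp$ by reindexing, the inequality $\lesssim$ by embedding sequences into $L^p_{1,\gamma}$ as step functions, and finiteness from Proposition~\ref{prop:HLbdd}. Throughout, I read the spaces in the statement through their coefficient norms. These are sums of $\abs{a_n}^p$ against a power of $(n+3)$ times $(\log(n+3))^{\gamma}$ for the domain, and $(\log(n+3))^{\gamma-p}$ for the codomain. The power of $(n+3)$ is the one fixed by \eqref{eq:DefOfHL}, and it is the same power carried by $\ell^p_{1,\gamma}$ and $\ell^p_{2,\gamma}$.

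\textbf{Step 1 ($\asymp$).} By \eqref{eq:CoeffHilbAbs} and the positivity of the kernel of $H_M$, it suffices to test both operator norms on nonnegative sequences. I will use the reindexing $a_n\mapsto b_{n+3}:=a_n$. It is an isomorphism from the coefficient space of the domain onto $\ell^p_{1,\gamma}$, and likewise from the codomain onto $\ell^p_{2,\gamma}$, since both carry the same logarithmic powers in the same variable $n+3$. Under this identification,
\[
H_Ma(m)=\sum_{n\ge 0}\frac{a_n}{m+n+1}=\sum_{k\ge3}\frac{b_k}{(m+3)+k-5}.
\]
For $m+3,k\ge 3$ the kernel $1/(j+k-5)$ is comparable to $1/(j+k)$ with absolute constants, because $j+k\ge 6$. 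Hence, for nonnegative $b$, $H_Ma(m)\asymp H_lb(m+3)$ pointwise. Taking $p$-th powers against the codomain weight gives both inequalities between the two operator norms.

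\textbf{Step 2 ($\lesssim$).} Given a nonnegative $b\in\ell^p_{1,\gamma}$, I will set $f=\sum_{n\ge3} b_n\chi_{[n,n+1)}$ on $[e,\infty)$, adjusting the first cell to $[e,4)$. The weight $(\log x)^{\gamma}/x$ is comparable to $(\log n)^{\gamma}/n$ on $[n,n+1)$, so $\norm{f}_{L^p_{1,\gamma}}\asymp\norm{b}_{\ell^p_{1,\gamma}}$. For $x\in[m,m+1)$ and $y\in[n,n+1)$ we have $x+y\le m+n+2\le 2(m+n)$, hence $H_Lf(x)\ge \tfrac12 H_lb(m)$ on that cell. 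Integrating against $(\log x)^{\gamma-p}/x\asymp(\log m)^{\gamma-p}/m$ then gives
\[
\norm{H_lb}_{\ell^p_{2,\gamma}}\lesssim\norm{H_Lf}_{L^p_{2,\gamma}}\le \norm{H_L}\,\norm{f}_{L^p_{1,\gamma}}\lesssim \norm{H_L}\,\norm{b}_{\ell^p_{1,\gamma}} .
\]
Finiteness of $\norm{H_L}_{L^p_{1,\gamma}\to L^p_{2,\gamma}}$ is exactly Proposition~\ref{prop:HLbdd}, which uses the hypothesis $\gamma>p-1$.

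\textbf{Main obstacle.} The delicate point is Step 1, namely making sure the coefficient weights in the statement's spaces really match those of $\ell^p_{1,\gamma}$ and $\ell^p_{2,\gamma}$ after the shift $n\mapsto n+3$. The logarithmic factors are harmless, because $\log(n+3)\asymp\log n$ for $n\ge3$ and this keeps the two weights comparable. The power of $n+3$ fixed by \eqref{eq:DefOfHL} must be the one used in the $\ell^p$ spaces; if it were not, the comparison would fail. Accordingly, I would first write out the coefficient norms explicitly from \eqref{eq:DefOfHL} and check this match before running the kernel comparison.
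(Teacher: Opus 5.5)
Your proposal is correct and is essentially the paper's proof. For $\asymp$ you use positivity plus the shift $b_{n+3}=a_n$, with the kernel comparison $1/(j+k)\le 1/(j+k-5)\le 6/(j+k)$ made explicit; for $\lesssim$ and finiteness you use step functions plus Proposition~\ref{prop:HLbdd}, and your one-sided bound $H_Lf\ge\tfrac12 H_lb(m)$ on $[m,m+1)$ suffices in place of the paper's two-sided computation.

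The weight match you deferred holds only for $\alpha=p-2$. There $p-\alpha-3=-1$, and the shift is an exact isometry onto $\ell^p_{1,\gamma}$ and $\ell^p_{2,\gamma}$. So the subscript $p-1$ in the printed statement must be read as $p-2$; literally, the weight would be $(n+3)^{-2}(\log(n+3))^{\gamma}$, the sequence $a_n\equiv 1$ would lie in the domain, and $H_Ma(0)=\sum_n 1/(n+1)=\infty$. You should state this reading explicitly rather than assert the match.
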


\begin{proof}
Since the nonnegative sequences/Taylor coefficients are maximizing due to the positive kernels of the operators, the comparability, $\asymp$, follows immediately after an application of shift operators.

To prove the inequality, $\lesssim$, let $a\in\ell_{1,\gamma}^p$ with $a_n\geq 0$ for all $n$, and consider the function $f\colon[e,\infty)\to \mathbb R$ defined by
\[
f_a(x)= \sum_{n=3}^{\infty} a_n  \chi_{[n,n+1)}(x). 
\]
It is clear  $\norm{f_a}_{L_{1,\gamma}^p}^p \asymp \norm{a}_{\ell_{1,\gamma}^p}^p$ and
\begin{align*}
\norm{H_L f_a}_{L_{2,\gamma}^p}^p &= \bigg(\int_e^3 +\sum_{m=3}^{\infty}\int_m^{m+1} \bigg) \bigg[ \bigg(\int_e^3 +\sum_{k=3}^{\infty}\int_k^{k+1} \bigg) \Big( \sum_{n=3}^{\infty} a_n \chi_{[n,n+1)}(y) \Big)  \, \frac{dy}{x+y} \bigg]^p \frac{(\log x)^{\gamma-p}}{x}\, dx\\
&\asymp \bigg( \sum_{k=3}^{\infty} a_k   \, \frac{1}{3+k}  \bigg)^p +\sum_{m=3}^{\infty} \bigg( \sum_{k=3}^{\infty} a_k   \, \frac{1}{m+k}  \bigg)^p \, \frac{(\log m)^{\gamma-p}}{m}\\
&\asymp_{\gamma,p} \sum_{m=3}^{\infty} \bigg( \sum_{k=3}^{\infty} a_k   \, \frac{1}{m+k}  \bigg)^p \, \frac{(\log m)^{\gamma-p}}{m} = \norm{H_l a}_{\ell_{2,\gamma}^p}^p.
\end{align*}
The theorem now follows from Proposition \ref{prop:HLbdd} noting that the nonnegative sequences form a maximizing set of $H_l$ since the kernel is nonnegative.
\end{proof}

Combining Theorem \ref{thm:HlBdd}, Proposition \ref{prop:necessaryCondHL}, and the fact that $\mathbb R\ni\gamma\mapsto \norm{f}_{\HL^p_{\gamma,p-1}}$ is increasing, we obtain Theorem \ref{thm:mainHL}.

We finish the section with the following result.

\begin{cor}\label{cor:OtherOperatorsHL}
Theorem \ref{thm:mainHL} holds if the operator $H_I$ is exchanged with $H_M$ or $\widetilde H_I$.
\end{cor}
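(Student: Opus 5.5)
The plan is to reduce both operators to the case already covered. For $H_M$ nothing new is needed. The discussion preceding Proposition \ref{prop:necessaryCondHL} shows two things. First, membership and norm in $\HL^p_{\gamma,p-2}$ depend only on $\abs{a_n}$, by \eqref{eq:CoeffHilbAbs}. Second, whenever \eqref{eq:SuffCondWellDef} holds, $H_I f = H_M f$. For $\gamma>p-1$, Hölder's inequality gives \eqref{eq:SuffCondWellDef} on $\HL^p_{\gamma,p-2}$. So $H_I = H_M$ there, and boundedness and unboundedness transfer verbatim. For $\gamma\le p-1$, the counterexample in Proposition \ref{prop:necessaryCondHL} was already phrased in terms of $H_M$, since it shows $H_M f(2)=\infty$. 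This settles the statement for $H_M$.

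For $\widetilde H_I$ I will compare it with $H_M$ applied to coefficient moduli. For $f(z)=\sum a_n z^n$, the Taylor coefficients of $\widetilde H_I f$ are
\[
c_m=\int_0^1 \abs{f(t)}t^m\,dt .
\]
This holds whenever $\int_0^1\abs{f(t)}\,dt<\infty$, by expanding $1/(1-tz)$ and applying Fubini/Tonelli.

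\emph{Sufficiency.} Let $g(z)=\sum\abs{a_n}z^n$. Since $\abs{f(t)}\le g(t)$ for $t\in[0,1)$, we get
\[
0\le c_m\le \int_0^1 g(t)t^m\,dt=\sum_n \frac{\abs{a_n}}{m+n+1}=H_M g(m).
\]
The $\HL^p_{\beta,p-2}$ norm is monotone in the moduli of the coefficients. Hence
\[
\norm{\widetilde H_I f}_{\HL^p_{\beta,p-2}}\le \norm{H_M g}_{\HL^p_{\beta,p-2}}\lesssim \norm{g}_{\HL^p_{\gamma,p-2}}=\norm{f}_{\HL^p_{\gamma,p-2}}
\]
whenever $\gamma>p-1$ and $\gamma-\beta\ge p$. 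In particular $\widetilde H_I$ is well defined there: $\int_0^1 g<\infty$ by \eqref{eq:SuffCondWellDef}.

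\emph{Necessity.} Both counterexamples in Proposition \ref{prop:necessaryCondHL} use nonnegative coefficients. For such $f$ we have $\abs{f(t)}=f(t)$ on $[0,1)$, so $\widetilde H_I f=H_I f=H_M f$ coefficientwise, and the blow-up ($H_M f(2)=\infty$, respectively $\norm{H_M f}_{\HL^p_{\beta,p-2}}=\infty$) carries over. A small point for the first counterexample: $\widetilde H_I f$ is undefined because $c_2=\int_0^1 f(t)t^2\,dt=\infty$, by monotone convergence.

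The only delicate step is the interpretation of ``well defined'' for $\widetilde H_I$ in the case $\gamma\le p-1$. I would take it to mean that the integral defining $\widetilde H_I f(z)$ converges for all $z\in\D$. This fails as soon as some coefficient $c_m$ is infinite, and that is exactly what monotone convergence gives. Everything else is a direct consequence of the positivity of the kernels.
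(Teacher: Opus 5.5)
Your proposal is correct and follows essentially the paper's argument. The $H_M$ case is immediate, necessity for $\widetilde H_I$ comes from the nonnegative-coefficient examples of Proposition \ref{prop:necessaryCondHL}, and your coefficientwise bound of $\widetilde H_I f$ by $H_M$ applied to $\sum\abs{a_n}z^n$ is exactly the paper's $f^{||}$ chain, with your explicit handling of well-definedness of $\widetilde H_I$ for $\gamma\le p-1$ being a small point the paper leaves implicit.
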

\begin{proof}
The statement is clear for $H_M$, since $H_I=H_M$.

Moreover, for a function $f$ with nonnegative Taylor coeffiients, $a_n \geq 0$, $H_I f =\widetilde H_If$, hence, Proposition \ref{prop:necessaryCondHL} gives the statement about unboundedness. 

For a function $f=\sum a_k z^k$, let $f^{||}=\sum \abs{a_k} z^k$. The boundedness statement now follows from
\begin{align*}
\norm{H_I}_{\HL^p_{\gamma,p-2}\to\HL^p_{\beta,p-2}}^p &\leq \norm{\widetilde H_I}_{\HL^p_{\gamma,p-2}\to\HL^p_{\beta,p-2}}^p   \leq  \sup_{\norm{f }_{\HL^p_{\gamma,p-2}} = 1} \norm{\widetilde H_I f^{||}}_{\HL^p_{\beta,p-2}}^p \leq \norm{H_I}_{\HL^p_{\gamma,p-2}\to\HL^p_{\beta,p-2}}^p,
\end{align*}
where the paragraph containing \eqref{eq:CoeffHilbAbs} gives the last inequality, and $\gamma>p-1>0, \, \beta\in\mathbb R$.
\end{proof}

\section{The Hilbert operator between weighted Bergman spaces}

Before we proceed, we present the following elementary lemma, which can be proved by differentiation.
\begin{lem}\label{lem:eventuallyIncreasing}
Let $\gamma_1,\gamma_2\in\mathbb R$. The function
\[
f\colon x\mapsto x \bigg(\log \frac{e^e}{x}\bigg)^{\gamma_1} \bigg(\log \bigg(\log \frac{e^e}{x}\bigg)\bigg)^{\gamma_2},
\]
is bounded on $(0,1)$, and increasing on $(0,\epsilon)$ or some $\epsilon=\epsilon(\gamma_1,\gamma_2)$.
\end{lem}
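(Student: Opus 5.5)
The plan is to substitute $u=\log\frac{e^e}{x}$, which lies in $(e,\infty)$ for $x\in(0,1)$ and satisfies $x=e^{e-u}$. Then
\[
f(x)=g(u):=e^{e-u}u^{\gamma_1}(\log u)^{\gamma_2},\qquad u\in(e,\infty),
\]
and since $u\mapsto\log u$ is positive on this range, $g$ is continuous and positive. Both claims reduce to the behaviour of $g$ as $u\to\infty$, which corresponds to $x\to0^+$, and as $u\to e^+$, which corresponds to $x\to1^-$.

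For boundedness, I would note that $g$ extends continuously to $u=e$ with value $e^{\gamma_1}$, since $\log e=1$. Moreover $g(u)\to0$ as $u\to\infty$, because the exponential factor $e^{-u}$ dominates any power of $u$ and of $\log u$. A continuous function on $[e,\infty)$ that tends to $0$ at infinity is bounded, hence $f$ is bounded on $(0,1)$.

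For monotonicity, I would differentiate $\log f$ directly in $x$. Using $\frac{du}{dx}=-\frac1x$,
\[
\frac{d}{dx}\log f(x)=\frac1x\Big(1-\frac{\gamma_1}{u}-\frac{\gamma_2}{u\log u}\Big).
\]
As $x\to0^+$ we have $u\to\infty$, so the bracket tends to $1$. Hence there is an $\epsilon=\epsilon(\gamma_1,\gamma_2)\in(0,1)$ such that the bracket exceeds $1/2$ on $(0,\epsilon)$. Since $f>0$, the derivative $f'=f\cdot(\log f)'$ is then positive there, so $f$ is increasing on $(0,\epsilon)$.

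There is no real obstacle here. The only point needing care is that $\log\log\frac{e^e}{x}$ stays positive, so that $(\log\cdot)^{\gamma_2}$ is defined for every real $\gamma_2$. This holds because $\frac{e^e}{x}>e^e$ for $x\in(0,1)$, which gives $\log\log\frac{e^e}{x}>1$; this is presumably why the constant $e^e$ appears in the statement.
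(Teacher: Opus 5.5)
Your proof is correct and takes essentially the same route the paper indicates; the paper only remarks that the lemma ``can be proved by differentiation,'' and your computation of the logarithmic derivative $\frac{1}{x}\bigl(1-\frac{\gamma_1}{u}-\frac{\gamma_2}{u\log u}\bigr)$ with $u=\log\frac{e^e}{x}$ supplies that argument. Your boundedness argument (continuity up to $u=e$ together with decay as $u\to\infty$) fills in the remaining detail cleanly.
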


We start by giving a counterpart to the first part of Proposition \ref{prop:necessaryCondHL}.
\begin{prop}\label{prop:HwellDefined}
The operators $H_I$ and $\widetilde H_I$ are well defined on $A^p_{v_{\gamma,p-2}}$ if and only if $\gamma>p-1$.
\end{prop}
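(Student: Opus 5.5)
For sufficiency I would reduce to Lemma~\ref{lem:MomentsOfAbsH}; for necessity I would exhibit an explicit function in $A^p_{v_{p-1,p-2}}$ whose restriction to $[0,1)$ is positive and not integrable.

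\emph{Sufficiency.} Let $\gamma>p-1$. Lemma~\ref{lem:MomentsOfAbsH} with $n=0$ and $\alpha=p-2$ gives $\int_0^1\abs{f(t)}\,dt\lesssim \norm{f}_{A^p_{v_{\gamma,p-2}}}<\infty$. For $z$ in a compact set $\abs{z}\le\rho<1$ we have $\abs{1-tz}\ge 1-\rho$. Hence the integrals defining $H_If(z)$ and $\widetilde H_I f(z)$ converge absolutely and locally uniformly in $z$. Since the integrands are analytic in $z$, Fubini's theorem and Morera's theorem (or differentiation under the integral sign) show that $H_If$ and $\widetilde H_If$ are analytic in $\D$. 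So both operators are well defined.

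\emph{Necessity.} Let $\gamma\le p-1$. Since $\log\frac{e}{1-\abs{z}^2}\ge 1$, we have $v_{\gamma,p-2}\lesssim v_{p-1,p-2}$, so $A^p_{v_{p-1,p-2}}\subset A^p_{v_{\gamma,p-2}}$ and it suffices to treat $\gamma=p-1$. Mimicking the sequence used in Proposition~\ref{prop:necessaryCondHL}, I would take
\[
f(z)=\frac{1}{(1-z)\,\log\frac{e^e}{1-z}\,\log\log\frac{e^e}{1-z}},
\]
using principal branches; these are well defined since $\operatorname{Re}\frac{e^e}{1-z}>e^e/2$. On $[0,1)$ the function $f$ is positive, so $H_If(0)=\widetilde H_If(0)=\int_0^1 f(t)\,dt$. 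The substitution $x=1-t$ turns this into $\int_0^1 \frac{dx}{x\log(e^e/x)\log\log(e^e/x)}=\infty$. This is the same iterated-logarithm divergence as in Proposition~\ref{prop:necessaryCondHL}. Therefore neither $H_If$ nor $\widetilde H_If$ is defined.

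It remains to show $f\in A^p_{v_{p-1,p-2}}$, and this is the step I expect to be the main obstacle. For $z\in\D$ near $1$, the arguments of the logarithms stay in a sector, so $\abs{\log\frac{e^e}{1-z}}\asymp \log\frac{e^e}{\abs{1-z}}$, and similarly for the iterated logarithm. Hence $\abs{f(z)}\asymp g(\abs{1-z})$ with $g(x)=\big(x\log\frac{e^e}{x}\,\log\log\frac{e^e}{x}\big)^{-1}$. Writing $z=re^{i\theta}$, we have $\abs{1-re^{i\theta}}\asymp (1-r)+\abs{\theta}$.

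I would split the angular integral $\int_{-\pi}^{\pi}g(\abs{1-re^{i\theta}})^p\,d\theta$ into $\abs{\theta}<1-r$ and $\abs{\theta}\ge 1-r$.
\begin{itemize}
\item On the first piece the integrand is comparable to $g(1-r)^p$. For this comparison I would use Lemma~\ref{lem:eventuallyIncreasing} to get monotonicity of $x\mapsto 1/g(x)$ near $0$.
\item On the second piece, $p>1$ lets the power $\theta^{-p}$ dominate the slowly varying log factors, again by Lemma~\ref{lem:eventuallyIncreasing} applied to $x^{1/2}(\cdots)$-type expressions. This piece is also $\lesssim (1-r)\,g(1-r)^p$.
\end{itemize}
Thus $M_p^p(r,f)\lesssim (1-r)^{1-p}\big(\log\frac{e^e}{1-r}\big)^{-p}\big(\log\log\frac{e^e}{1-r}\big)^{-p}$. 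Multiplying by $(1-r)^{p-2}\big(\log\frac{e}{1-r}\big)^{p-1}$ and integrating in $r$ leaves
\[
\int_0^1\frac{dr}{(1-r)\log\frac{e^e}{1-r}\,\big(\log\log\frac{e^e}{1-r}\big)^p},
\]
which is finite for $p>1$ by the integration by parts already used in Proposition~\ref{prop:necessaryCondHL}.
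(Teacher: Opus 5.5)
Your proposal is correct and follows the paper's proof essentially step by step. You use sufficiency via Lemma~\ref{lem:MomentsOfAbsH}, the reduction to $\gamma=p-1$, the same test function, and the same estimate of the integral means: split $t^{-p}=t^{-(p+1)/2}\,t^{-(p-1)/2}$ and use the monotonicity from Lemma~\ref{lem:eventuallyIncreasing}. The differences are only cosmetic: the paper avoids splitting at $\abs{\theta}=1-r$ by using $\abs{1-re^{i\theta}}\asymp 1-r+\abs{\theta}$ and shifting to integrate over $[1-r,\pi+1]$, and it uses only the one-sided bound $\abs{\log w}\ge \log\abs{w}$ in place of your two-sided sector comparison.
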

\begin{proof}
It follows from Lemma \ref{lem:MomentsOfAbsH} that $H_I$ and $\widetilde H_I$ are well defined on $A^p_{v_{\gamma,p-2}}$ if $\gamma>p-1$.

We proceed to assume $\gamma\leq p-1$, and because $A^p_{v_{p-1,p-2}} \subset A^p_{v_{\gamma,p-2}}$, it suffices to consider $\gamma=p-1$. Consider the function

\[
f(z) = \frac{1}{(1-z) \big(\log \frac{e^e}{1-z}\big)\big(\log\big(\log \frac{e^e}{1-z}\big)\big)}.
\]

It is clear that 
\[
H_I f(0) = \widetilde H_I f(0) =  \int_0^1 f(t) \, dt= \int_{e^e}^\infty \frac{dt}{ t (\log t) (\log(\log t) )  }  = \lim_{t\to +\infty}\log(\log(\log t)) = \infty.
\]

Moreover, to see that $f\in A^p_{v_{p-1,p-2}}$, we proceed by first noting that $t^2/(1-\cos t) \in (2,5)$ when $t\in (-\pi,\pi)\setminus \{0\}$ yielding 
\[
\abs{1-re^{it}}^2 = (1-r)^2+2r(1-\cos t) \leq (1-r+\abs{t})^2 \leq 10 \abs{1-re^{it}}^2 ,\quad t\in (-\pi,\pi), \, r\in (0,1).
\]
Using $\abs{\log z} \geq \Re \log z = \log \abs{z},\, z\neq 0 $, and Lemma \ref{lem:eventuallyIncreasing}, we have for $r\in (0,1)$
\begin{align*}
\int_{-\pi}^{\pi}  \abs{  f(re^{i\theta})    }^p \, d\theta & \leq \int_{-\pi}^{\pi}   \frac{1}{\abs{1-re^{i\theta}}^p \big(\log \frac{e^e}{\abs{1-re^{i\theta}}}\big)^p\big(\log\big(\log \frac{e^e}{\abs{1-re^{i\theta}}}\big)\big)^p} \, d\theta \\
&\lesssim_p \int_0^{\pi}   \frac{1}{(1-r+t)^p \big(\log \frac{e^e}{1-r+t}\big)^p\big(\log\big(\log \frac{e^e}{1-r+t}\big)\big)^p} \, dt \\
&\leq \int_{1-r}^{\pi + 1}   \frac{    t^{-\frac{p-1}{2}}   }{ \big(\log \frac{e^e}{t}\big)^p\big(\log\big(\log \frac{e^e}{t}\big)\big)^p} \frac{1}{t^{\frac{p+1}{2}}} \, dt  \\
&\lesssim_p  \frac{    (1-r)^{-\frac{p-1}{2}}   }{ \big(\log \frac{e^e}{1-r}\big)^p\big(\log\big(\log \frac{e^e}{1-r}\big)\big)^p} \int_{1-r}^1   \frac{1}{t^{\frac{p+1}{2}}} \, dt + M(p) \\
&\lesssim_p    \frac{    (1-r)^{1-p}   }{ \big(\log \frac{e^e}{1-r}\big)^p\big(\log\big(\log \frac{e^e}{1-r}\big)\big)^p}   
\end{align*}
for some constant $M(p)>0$. It follows that
\begin{align*}
\norm{ f  }_{A^p_{v_{p-1,p-2}}}^p &\lesssim_p \int_0^1   \frac{    (1-r)^{1-p}   }{ \big(\log \frac{e^e}{1-r}\big)^p\big(\log\big(\log \frac{e^e}{1-r}\big)\big)^p}  \log(\frac{e}{1-r} )^{p-1}  (1-r)^{p-2} \, dr \\
&\leq  \int_0^1   \frac{   1  }{(1-r) \big(\log \frac{e^e}{1-r}\big)\big(\log\big(\log \frac{e^e}{1-r}\big)\big)^p}   \, dr,
\end{align*}
which is finite since $p>1$.
\end{proof}

Now, we present a weaker version of the second part of Proposition \ref{prop:necessaryCondHL} for $A^p_{v_{p-1,p-2}},\, p>1$.

Define the classical sequence of functions \((f_{n}(z))_{n=0}^{\infty} = ((1-z)^{-c_n})_{n=0}^{\infty}\), where \(c_n = (\alpha+2)/p - 1/(n+1), n\in\mathbb N\). In our case $p=\alpha+2$, so the expression simplifies. To prove unboundedness, we utilize the classical representation of $H_I$, introduced in \cite[(2.2)]{Diamantopoulos2000}, and used on $A^p_{\alpha}, \, p>2+\alpha> -1$ in \cite[p.~519]{Karapetrovic2018}. We have
\begin{equation}\label{eq:UsefulRepresentationOfH}
H_If(z) = \int_0^1 \frac{\phi_t(z) }{t} f(\phi_t(z)) \, dt ,
\end{equation}
where
\[
\phi_t(z) = \frac{t}{1-(1-t)z}, \quad t\in (0,1),\ z\in \D.
\]
For verification that this representation is valid on $A^p_{v_{\gamma,p-2}},\, \gamma>p-1$, we refer to \cite[pp.~4--6]{Lindstrom2022} and Lemma \ref{lem:MomentsOfAbsH}, which proves that 
\[
A^p_{v_{\gamma,p-2}} \subset A^1_{[0,1)} := \Big\{ f\in\HOLO(\D) :  \int_0^1 \abs{f(r)} \, dr <\infty \Big\}.
\]

\begin{prop}\label{prop:HIonBergmanNotBdd}
Let $\gamma >p-1>0$. If $\gamma-\beta<p$, then $H_I\colon A^p_{v_{\gamma,p-2}} \to A^p_{v_{\beta,p-2}}$ is not bounded.
\end{prop}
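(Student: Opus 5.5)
The plan is to reduce the statement to the Hardy--Littlewood case already settled in Proposition \ref{prop:necessaryCondHL}. The tool is the two-sided comparability in Lemma \ref{lem:HLineq}(c), which is valid for all $p>1$ provided the Taylor coefficients are nonnegative, nonincreasing and tend to zero. The idea is to take the same test function as in the proof of Proposition \ref{prop:necessaryCondHL}, check that both it and its image under $H_I$ have coefficients of this type, and pass the norms back and forth between $\HL^p$ and $A^p$.

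\emph{Step 1: the test function lies in the domain.} Let $f(z)=\sum_{n\ge 0} a_n z^n$ with $a_n=(\log(n+3))^{-(\beta+p+1)/p}$. The hypotheses $\gamma>p-1$ and $\gamma-\beta<p$ give $\beta>\gamma-p>-1$, so the exponent $(\beta+p+1)/p$ is positive and exceeds $1$. Hence $(a_n)$ is positive, decreasing and tends to $0$. The proof of Proposition \ref{prop:necessaryCondHL} shows $f\in\HL^p_{\gamma,p-2}$. Since $\alpha=p-2>-1$, Lemma \ref{lem:HLineq}(c) then gives $f\in A^p_{v_{\gamma,p-2}}$ with comparable norm.

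\emph{Step 2: the image has the right type of coefficients.} Because $(\beta+p+1)/p>1$, the series $\sum_n a_n/(n+1)$ converges, so \eqref{eq:SuffCondWellDef} holds and $H_If=H_Mf$. The Taylor coefficients of $H_Mf$ are
\[
b_m=\sum_{n\ge 0}\frac{a_n}{m+n+1}.
\]
Each $b_m$ is finite and nonnegative, and $(b_m)$ is nonincreasing in $m$ because every term of the sum decreases in $m$. Dominated convergence gives $b_m\to 0$. So Lemma \ref{lem:HLineq}(c) also applies to $H_If$.

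\emph{Step 3: the image is not in the codomain.} The proof of Proposition \ref{prop:necessaryCondHL} gives $b_m\gtrsim(\log(m+3))^{-(\beta+1)/p}$, and hence $\norm{H_Mf}_{\HL^p_{\beta,p-2}}=\infty$. By Lemma \ref{lem:HLineq}(c) we get $\norm{H_If}_{A^p_{v_{\beta,p-2}}}=\infty$. Thus $H_I$ does not even map $A^p_{v_{\gamma,p-2}}$ into $A^p_{v_{\beta,p-2}}$, so it is certainly not bounded.

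The step needing most care is the use of Lemma \ref{lem:HLineq}(c) in the direction $\|\cdot\|_{A^p}\gtrsim\|\cdot\|_{\HL^p}$ when the $\HL^p$ norm is infinite. If the lemma is only read for functions already known to lie in one of the two spaces, I would argue by truncation. Replace $(b_m)$ by the sequence $b^{(N)}_m=b_m-b_N$ for $m\le N$ and $b^{(N)}_m=0$ for $m>N$. This sequence is still nonnegative, nonincreasing and eventually zero, and the corresponding function is a polynomial, so it lies in both spaces. Apply the comparability to these polynomials. Then let $N\to\infty$, using monotone convergence on the $\HL^p$ side and Fatou's lemma (together with coefficientwise convergence, which gives locally uniform convergence) on the $A^p$ side.
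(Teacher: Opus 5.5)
Your argument is correct, and it takes a different route from the paper. The paper works entirely on the Bergman side. It uses the test functions $f_n(z)=(1-z)^{-c_n}$ with $c_n=1-1/(n+1)$, and the representation \eqref{eq:UsefulRepresentationOfH} to write $H_If_n=C_nf_n$ with $\abs{C_n}\asymp n+1$ uniformly on $\D$. Combined with $\norm{f_n}^p_{A^p_{v_{\gamma,p-2}}}\asymp(n+1)^{\gamma+1}$, this gives a norm ratio of order $(n+1)^{(p+\beta-\gamma)/p}\to\infty$.

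You instead transport the counterexample of Proposition~\ref{prop:necessaryCondHL} through Lemma~\ref{lem:HLineq}(c). Your key extra observation is that $H_M$ maps nonnegative, nonincreasing null sequences to sequences of the same kind, so (c) applies to both $f$ and $H_If$. The paper's route avoids the Hardy--Littlewood theorem for monotone coefficients that underlies (c), and it gives an explicit blow-up rate. Yours is shorter given that lemma. It produces a single explicit $f\in A^p_{v_{\gamma,p-2}}$ with $H_If\notin A^p_{v_{\beta,p-2}}$, the exact analogue of Proposition~\ref{prop:necessaryCondHL}; the paper describes its own Bergman result as a weaker version of that proposition. (Since $H_I\colon A^p_{v_{\gamma,p-2}}\to\HOLO(\D)$ is continuous by Lemma~\ref{lem:MomentsOfAbsH}, the closed graph theorem shows that non-inclusion and unboundedness are in fact equivalent here.)

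One correction to your last paragraph: the truncation fallback does not work as written. Fatou's lemma gives $\norm{F}_{A^p_{v_{\beta,p-2}}}\le\liminf_N\norm{F_N}_{A^p_{v_{\beta,p-2}}}$, which is an upper bound, whereas Step~3 needs a lower bound on $\norm{F}_{A^p_{v_{\beta,p-2}}}$.

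The fallback is unnecessary anyway, because (c) holds with values in $[0,\infty]$. It is obtained by applying the monotone-coefficient Hardy--Littlewood estimate to the dilations $f_r$, where both sides are finite, and integrating against the weight using Tonelli's theorem and Lemma~\ref{lem:asympOfWeightInSeries}. The paper itself uses Lemma~\ref{lem:HLineq} in this extended sense.

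If you want a safeguard, use dilations instead of truncations. The function $F_\rho(z)=F(\rho z)$ has coefficients $b_m\rho^m$ of the same type and lies in both spaces. It satisfies $\norm{F_\rho}_{A^p_{v_{\beta,p-2}}}\le\norm{F}_{A^p_{v_{\beta,p-2}}}$, because the integral means obey $M_p(r,F_\rho)=M_p(\rho r,F)\le M_p(r,F)$ and the weight is radial. By monotone convergence, $\norm{F_\rho}_{\HL^p_{\beta,p-2}}\uparrow\norm{F}_{\HL^p_{\beta,p-2}}=\infty$, so $\norm{F}_{A^p_{v_{\beta,p-2}}}=\infty$.
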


\begin{proof}
Using \cite[Theorem 1.12]{zhu2005spaces}, we have for $n\in\mathbb N$

\begin{align*}
\norm{f_{n}}_{A^p_{v_{\gamma,p-2}}}^p &= \int_{\D} \abs{ 1-z }^{-p+p/(n+1)}  v_{\gamma,p-2}(z) \, dA(z)\  \asymp  \int_0^1 ( 1-r )^{1-p+p/(n+1)}  v_{\gamma,p-2}(r) \, dr \\
&\asymp_{p,\gamma}  \int_0^1 ( 1-r )^{-1+p/(n+1)}   \bigg(\log\frac{e}{1-r} \bigg)^{\gamma}  \, dr \\
&= \int_0^\infty ( e^{-r})^{p/(n+1)}   (1+r)^{\gamma}  \, dr\\
&= \frac{ (n+1)^{\gamma+1}  \int_0^\infty e^{-r} \Big(\frac{p}{n+1}  +r\big)^{\gamma}  \, dr  }{  p^{\gamma+1}   },
\end{align*}
which shows that $f_{n}\in A^p_{v_{\gamma,p-2}}$ for every $\gamma\in\mathbb R$ and $n\in\mathbb N$. Moreover, for $\gamma>-1$ it holds that

\begin{equation}\label{eq:asympOfTestFunctions}
\norm{f_{n}}_{A^p_{v_{\gamma,p-2}}}^p \asymp_{\gamma,p} (n+1)^{\gamma+1}.
\end{equation}

To proceed, representation \eqref{eq:UsefulRepresentationOfH} allows us to treat $H_I $ as a multiplication operator on the family $\{f_n:n\in\mathbb N\}$. Indeed,
\[
H_I f_n = f_n  C_n,
\]
where
\[
C_n(z) = \int_0^1 \frac{  (1-(1-t)z)^{c_n-1}    }{  (1-t)^{c_n}  }  \, dt,\quad z\in \D.
\]
Since $c_n = 1- 1/(n+1) \in [1/2,1)$, we have for $z\in\D$
\[
\frac{1}{2}\int_0^1  \frac{ t^{\frac{1}{2}} }{ (1-t)^{c_n} } \, dt  \leq  \int_0^1 \frac{\Re (1-(1-t)\CONJ{z})^{1-c_n}}{ \abs{ 1-(1-t)z }^{2(1-c_n)}  } \frac{1}{(1-t)^{c_n}} \, dt = \Re C_n(z)\leq \abs{C_n(z)} \leq  \int_0^1  \frac{ t^{-\frac{1}{2}} }{ (1-t)^{c_n} } \, dt,
\]
which yields $\abs{C_n(z)}  \asymp (1-c_n)^{-1} = n+1$.
Since $\beta>-1$, it now follows from \eqref{eq:asympOfTestFunctions} that  
\[
\norm{H_I f_n}_{A^p_{v_{\beta,p-2}}} \asymp (1-c_n)^{-1} \norm{f_n}_{A^p_{v_{\beta,p-2}}} \asymp_{p,\beta} (n+1)^{1+(\beta+1)/p} , \quad n\in \mathbb N.   
\]
Now \eqref{eq:asympOfTestFunctions} yields that 
\[
\lim_{n\to \infty} \frac{ \norm{H_I f_n}_{A^p_{v_{\beta,p-2}}} }{  \norm{f_n}_{A^p_{v_{\gamma,p-2}}} } = \infty 
\]
when $\beta>\gamma-p$.

\end{proof}

Concerning a sufficient condition for boundedness, we have the following proposition. The strategy is essentially outlined in the proof of \cite[Lemma 8]{Galanopoulos2014}, but some non trivial modifications are needed due to the logarithmic weights. 

\begin{prop}\label{prop:HIbddNonSharp}
Let $\gamma >p-1>0$. If $\gamma-\beta>p$, then both $H_I$ and $\widetilde H_I$ act boundedly $ A^p_{v_{\gamma,p-2}} \to A^p_{v_{\beta,p-2}}$.
\end{prop}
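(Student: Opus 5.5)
Since $\abs{H_I f}\leq \widetilde H_I f$ pointwise, I will only prove boundedness of $\widetilde H_I$. The plan follows the strategy of \cite[Lemma 8]{Galanopoulos2014}: reduce to the values of $\abs{f}$ on the radius, and control that radial function through a radially increasing majorant whose Bergman-type norm is comparable to $\norm{f}_{A^p_{v_{\gamma,p-2}}}$. Concretely, set $M(r)=\max_{\abs{z}\le r}\abs{f(z)}$, or better, a local integral mean $\tilde M(r)=\big(\frac{1}{\abs{D_r}}\int_{D_r}\abs{f}^p\,dA\big)^{1/p}$ over a pseudo-hyperbolic disc $D_r$ centred at $r$. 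Subharmonicity gives $\abs{f(r)}\lesssim \tilde M(r)$. Since $v_{\gamma,p-2}$ is essentially constant on $D_r$, we also get
\[
\int_0^1 \tilde M(r)^p (1-r)^{p-1}\Big(\log\tfrac{e}{1-r}\Big)^{\gamma}\,dr\lesssim \norm{f}_{A^p_{v_{\gamma,p-2}}}^p .
\]
The point of this choice is that the weight $(1-r)^{p-1}$ is the one-dimensional shadow of $(1-r)^{p-2}\,dA$ near the boundary.

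Next I bound the kernel. For $z\in\D$ and $t\in(0,1)$ we have $\abs{1-tz}\gtrsim (1-t)+(1-\abs{z})$, which leads to
\[
\widetilde H_I f(z)\lesssim \frac{1}{1-\abs{z}}\int_0^{\abs{z}} \tilde M(t)\,dt+\int_{\abs{z}}^1\frac{\tilde M(t)}{1-t}\,dt .
\]
This is exactly the averaging operator plus its adjoint, as in Proposition \ref{prop:HLbdd}. After the substitution $x=1/(1-t)$ these become $A_{e,1}$- and $B_{e,1}$-type operators on $[1,\infty)$, and the measure becomes $\frac{(\log x)^\gamma}{x}\,dx$ on the domain side. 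The remaining task is to show
\[
\int_0^1 \big(\widetilde H_I f(r)\big)^p\ \text{averaged over circles}\ (1-r)^{p-2}\Big(\log\tfrac{e}{1-r}\Big)^{\beta}\,dr \lesssim \norm{f}_{A^p_{v_{\gamma,p-2}}}^p .
\]
Here an obstacle appears, and I expect it to be the main one. The bound above depends only on $\abs{z}$, so integrating over circles loses angular decay. Radially one gets only weight $(1-r)^{p-2}$, not $(1-r)^{p-1}$, which means one extra factor $1/(1-r)$ must be absorbed. I would not bound by $\abs{z}$ alone. Instead I would use $\int_0^{2\pi}\abs{1-tre^{i\theta}}^{-1}\,d\theta\asymp\log\frac{e}{1-tr}$ together with Minkowski's inequality in $\theta$; for $p>1$, Minkowski's integral inequality gives
\[
M_p(r,\widetilde H_I f)\le \int_0^1 \tilde M(t)\,\norm{(1-tre^{i\cdot})^{-1}}_{L^p}\,dt\asymp\int_0^1\frac{\tilde M(t)}{(1-tr)^{1-1/p}}\,dt .
\]
Then I split at $t=r$ as before. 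This produces the modified operators $A_{j,\eta}$, $B_{j,\eta}$ with $\eta=1-1/p$ after the change of variables, and they now carry the correct weight $(1-r)^{p-2}$.

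The exponent $\eta=1-1/p$ is exactly the borderline of Lemma \ref{lem:ModAvgBddOnLp}, which is why the logarithms matter. Following Lemma \ref{lem:ABbdd}, after $x=\log\frac{e}{1-r}$ the borderline power turns into exponential factors. The strict gap $\gamma-\beta>p$ will pay for the loss: a slack of $(\log)^{-\epsilon}$ with $\epsilon=\gamma-\beta-p>0$ lets me apply Hölder/Schur with test function $(1-t)^{-a}(\log\frac{e}{1-t})^{-b}$. I would run a Schur test on the kernel $K(r,t)=(1-tr)^{-1+1/p}$ between $L^p((1-t)^{p-1}\log^\gamma dt)$ and $L^p((1-r)^{p-2}\log^\beta dr)$ with $a=1/(pp')$ and suitable $b$. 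The hardest step is checking that both Schur integrals close with only logarithmic losses, and that these losses total at most $\log^{p}$ plus $\log^{\epsilon}$. That is where I expect the strict inequality to be used and the endpoint $\gamma-\beta=p$ to fail. Finally, well-definedness on $A^p_{v_{\gamma,p-2}}$ was already settled by Lemma \ref{lem:MomentsOfAbsH}.
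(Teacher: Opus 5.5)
Your route is correct and genuinely different from the paper's. The paper never reduces to a one-dimensional weighted inequality. It writes $\norm{H_If}^p_{A^p_{v_{\beta,p-2}}}$ through the dyadic blocks $\Delta_n H_I f$ (Mateljevi\'c--Pavlovi\'c). Minkowski and Pavlovi\'c's multiplier lemma then bound each block by $2^{n/p'}\int_0^1\abs{f(t)}t^{2^{n-1}}\,dt$. Finally it applies H\"older (Lemma~\ref{lem:MomentsOfAbsH}) against the \emph{full} norm of $f$ separately for every $n$. This gives $(n+1)^{(p-1-\gamma)/p}\norm{f}_{A^p_{v_{\gamma,p-2}}}$ per block and the series $\sum_n (n+1)^{\beta-\gamma+p-1}$. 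Spending the whole norm on each block is exactly where $\gamma-\beta>p$ is used. Your Minkowski step, $M_p(r,H_If)\lesssim\int_0^1\abs{f(t)}(1-tr)^{-1/p'}\,dt$, combined with the radial embedding $\int_0^1\abs{f(t)}^p(1-t)^{p-1}(\log\frac{e}{1-t})^{\gamma}\,dt\lesssim\norm{f}^p_{A^p_{v_{\gamma,p-2}}}$, keeps the $L^p$-information of $f$ along the radius. What remains is a positive kernel $\asymp\max(1-t,1-r)^{-1/p'}$ that can be treated sharply. The paper reuses its block method for the two-sided Lemma~\ref{lem:usefulRepresentationHIAp} and for $H_g$. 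Your step also covers $H_g$, since $M_p(\rho,g')\lesssim(1-\rho)^{-1/p'}$ for $g\in\Lambda(p,1/p)$.

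The Schur test you leave open does close, and with no slack. Since $\log\frac{e}{1-r}\geq 1$, it suffices to take $\beta=\gamma-p$. Put $u(t)=(1-t)^{p-1}(\log\frac{e}{1-t})^{\gamma}$ and $w(r)=(1-r)^{p-2}(\log\frac{e}{1-r})^{\beta}$, and consider the kernel $K(r,t)=w(r)^{1/p}(1-tr)^{-1/p'}u(t)^{-1/p}$ on unweighted $L^p(0,1)$. Take your test functions $h_1(t)=(1-t)^{-a}(\log\frac{e}{1-t})^{-b}$ and $h_2(r)=(1-r)^{-a}(\log\frac{e}{1-r})^{-b}$ with $a=1/(pp')$. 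Then $\int_0^1 K h_1^{p'}\,dt\lesssim h_2^{p'}$ and $\int_0^1 K h_2^{p}\,dr\lesssim h_1^{p}$ whenever $\frac{p-\gamma}{pp'}<b<\frac{\gamma}{p^2}$. This interval is nonempty precisely because $\gamma>p-1$.

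Equivalently, write $\phi(x)=\abs{f(1-e^{1-x})}e^{1-x}$, so the domain norm becomes $\int_1^\infty\phi^p x^{\gamma}\,dx$. After $x=\log\frac{e}{1-r}$:
\begin{itemize}
\item the part $t<r$ becomes convolution with $e^{-s/p'}\chi_{(0,\infty)}(s)\in L^1$, which is harmless once $\gamma\geq\beta$;
\item the part $t>r$ becomes $\phi\mapsto\int_x^\infty\phi$ from $L^p(x^{\gamma}dx)$ to $L^p(x^{\beta}dx)$ on $[1,\infty)$, which is the classical Hardy inequality since $\beta=\gamma-p>-1$.
\end{itemize}
So your expectation that the endpoint fails is mistaken. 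The argument proves boundedness for all $\gamma-\beta\geq p$ and every $p>1$, including the case $p>2$ that the paper only conjectures.

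Two slips, neither affecting the final argument. First, in your second paragraph the two regimes are interchanged: for $t<\abs{z}$ the kernel is $\lesssim(1-t)^{-1}$, and for $t>\abs{z}$ it is $\lesssim(1-\abs{z})^{-1}$. As written, your term $\int_{\abs{z}}^1\tilde M(t)(1-t)^{-1}\,dt$ diverges already for $f\equiv1$. Second, $\abs{H_If}\leq\widetilde H_If$ is not meaningful since $\widetilde H_If$ is complex-valued. Minkowski applies to $H_If$ and $\widetilde H_If$ verbatim, so this costs nothing.
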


In the proof we use the common abbreviation
\begin{equation}\label{eq:DeltaDefinition}
\Delta_0 f = a_0 \ \text{ and } \ \Delta_n f = \sum_{k=2^{n-1}} ^{2^n-1} a_k z^k \text{ where }  f\colon z\mapsto \sum_{n=0}^{\infty} a_n z^n \, z\in \mathbb D, \text{ and } n\in \mathbb N.
\end{equation}

\begin{proof}
Let $f\in A^p_{v_{\gamma,p-2}}$ with $\norm{f}_{A^p_{v_{\gamma,p-2}}} = 1$. For $r\in (0,1]$, put $\psi(r) = r v_{\beta,p-2}(\sqrt{1-r})$ and let $\delta=\sup\{0<\delta'<1 : \psi|_{(0,\delta')} \text{ is increasing}\}$. Notice that $\delta>0$ by Lemma \ref{lem:eventuallyIncreasing}, and define 
\[
\phi(r) = \begin{cases}
\psi(r)^{\frac{1}{p}}, & 0<r \leq \delta;\\  
\norm{\psi}_{L^{\infty}(0,1)}^{\frac{1}{p}}, & r>\delta.
\end{cases}
\]
Then we can apply \cite[Theorem 2.1(b)]{Mateljevic1984} with $q=p$ and since $\phi(1-r)^p/(1-r) \asymp v_{\beta,p-2}(r), \, r\in(0,1)$, we have 
\begin{equation}\label{eq:formulaForNorm}
\begin{split}
\norm{H_I f}_{A^p_{v_{\beta,p-2}}}^p &\asymp_{\beta,p}  \sum_{n=0}^{\infty} \phi(2^{-n})^p \int_0^{2\pi} \abs{ \Delta_n H_I f(e^{i\theta})  }^p \, d\theta \asymp_{\beta,p} \sum_{n=0}^{\infty} 2^{-n(p-1)} (n+1)^\beta \int_0^{2\pi} \abs{ \Delta_n H_I f(e^{i\theta})  }^p \, d\theta \\
& = 2\pi \abs{\int_0^1 f(t) \, dt    }^p + \sum_{n=1}^{\infty} 2^{-n(p-1)} (n+1)^\beta \int_0^{2\pi} \abs{ \Delta_n H_I f(e^{i\theta})  }^p \, d\theta.
\end{split}
\end{equation}
Moreover, by Minkowski's inequality, Lemma A on page~626 in \cite{Pavlovic2013}, and Lemma \ref{lem:MomentsOfAbsH}, we have for $n\in\mathbb N$ 
\begin{equation}\label{eq:MomentsOfHI}
\begin{split}
\bigg( \int_0^{2\pi} \abs{ \Delta_n H_I f(e^{i\theta})  }^p \, d\theta \bigg)^{\frac{1}{p}} &= \bigg( \int_0^{2\pi} \abs{   \sum_{k=2^{n-1}}^{2^n-1} e^{ik\theta} \int_0^1 t^k f(t) \, dt }^p \, d\theta  \bigg)^{\frac{1}{p}} \\
&\leq \int_0^1 \abs{f(t)} \bigg( \int_0^{2\pi} \abs{   \sum_{k=2^{n-1}} ^{2^n-1} e^{ik\theta}  t^k   }^p \, d\theta  \bigg)^{\frac{1}{p}} \, dt \\
&\lesssim_p \int_0^1 \abs{f(t)} t^{2^{n-1}} \bigg( \int_0^{2\pi} \abs{   \sum_{k=2^{n-1}} ^{2^n-1} e^{ik\theta}     }^p \, d\theta  \bigg)^{\frac{1}{p}} \, dt \\
&=\int_0^1 \abs{f(t)} t^{2^{n-1}} \, dt \, \bigg( \int_0^{2\pi} \abs{ \frac{\sin (2^{n-2} \theta)}{ \sin (\theta/2)}     }^p \, d\theta  \bigg)^{\frac{1}{p}}  \\
&\asymp_p \int_0^1 \abs{f(t)} t^{2^{n-1}} \, dt \, 2^{n(1-1/p)}\\
&\lesssim_p 
\bigg(\int_0^1  \bigg(\frac{ \log(\frac{e}{1-t} )^{-\gamma}  }{(1-t)^{p-1}}\bigg)^{\frac{p'}{p}} t^{2^{n-1}} \, dt   \bigg)^{\frac{1}{p'}} \norm{f}_{A^p_{v_{\gamma,\alpha}}} 2^{n(1-1/p)}.
\end{split}
\end{equation}

Combining this with \eqref{eq:formulaForNorm} and Lemmas \ref{lem:asympOfWeightInSeries} and \ref{lem:MomentsOfAbsH}, we obtain when $\gamma> p-1$,
\begin{equation}\label{eq:HIbddFinal}
\begin{split}
\norm{H_I f}_{A^p_{v_{\beta,p-2}}}^p &\lesssim_{\beta,p}  \bigg(\int_0^1\abs{f(t)} \, dt\bigg)^p + \sum_{n=1}^{\infty} (n+1)^\beta \bigg( \int_0^1 \abs{f(t)} t^{2^{n-1}} \, dt \bigg)^p \\
&\lesssim_{\gamma,p} \norm{f}_{A^p_{v_{\gamma,p-2}}} \bigg(1+\sum_{n=1}^{\infty} (n+1)^\beta  (\log(2^{n-1}+3))^{p-1-\gamma}\bigg)\\
&\asymp_{\gamma,p}\norm{f}_{A^p_{v_{\gamma,p-2}}} \sum_{n=0}^{\infty} (n+1)^{\beta-\gamma+p-1},
\end{split}
\end{equation}
where the right hand side is finite since $\gamma-\beta>p$. If we replace $H_I$ by $\widetilde H_I$, the same proof works without other modifications than replacing $f(t)$ by $\abs{f(t)}$ in two places (one in \eqref{eq:formulaForNorm} and one in \eqref{eq:MomentsOfHI}). 
\end{proof}

A comparison with the generalized Hardy-Littlewood spaces gives us the following proposition, which together with Proposition \ref{prop:HIonBergmanNotBdd} and Proposition \ref{prop:HIbddNonSharp} proves Theorem \ref{thm:main}.
\begin{prop}\label{prop:BetterResultForPleq2}
Let $1<p\leq 2$ and $\gamma >p-1$. If $\gamma-\beta \geq p$, then both $H_I$ and $\widetilde H_I$ act boundedly $A^p_{v_{\gamma,p-2}} \to A^p_{v_{\beta,p-2}}$.
\end{prop}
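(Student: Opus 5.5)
The plan is to route everything through the Hardy--Littlewood spaces, where Theorem \ref{thm:mainHL} already gives the sharp result, using Lemma \ref{lem:HLineq} on both ends. Since $1<p\leq 2$, part (a) of Lemma \ref{lem:HLineq} gives the embedding $A^p_{v_{\gamma,p-2}}\hookrightarrow \HL^p_{\gamma,p-2}$ on the domain side. On the codomain side we need the reverse inequality for $H_I f$. Part (b) requires $p\geq 2$, so it is unavailable, but part (c) is. The point is to work with functions whose coefficients are nonnegative and decreasing, and the image of the Hilbert matrix has exactly this property.

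Concretely, I first reduce to $\widetilde H_I$, since $|H_I f(z)|\leq \widetilde H_I f(|z|)$ is not quite what we need. Instead, note that the Taylor coefficients of $\widetilde H_I f$ are $b_m=\int_0^1 |f(t)|t^m\,dt$. These are nonnegative, nonincreasing in $m$, and tend to $0$ by dominated convergence, using $\int_0^1|f|<\infty$ from Lemma \ref{lem:MomentsOfAbsH}. Hence Lemma \ref{lem:HLineq}(c) gives
\[
\norm{\widetilde H_I f}_{A^p_{v_{\beta,p-2}}}\asymp \norm{\widetilde H_I f}_{\HL^p_{\beta,p-2}}.
\]
Similarly, $|b_m^{H_I}|=|\int_0^1 f(t)t^m\,dt|\leq b_m$. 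For $H_I$ itself I would therefore estimate via the coefficient side too, but part (a) goes the wrong way there. A safer route for $H_I$ is to observe that $H_I f$ and $\widetilde H_I f$ are both given by integrals against $(1-tz)^{-1}$, so
\[
\int_{\D}|H_If|^p v \,dA\leq \int_{\D}\Big(\int_0^1\frac{|f(t)|}{|1-tz|}\,dt\Big)^p v\,dA.
\]
This reduces $H_I$ to a positive operator. I would then show that this positive operator is dominated by $\widetilde H_I f(|z|)$-type quantities, or more simply handle $H_I$ by noting $\Re$-type bounds. If that fails, I fall back on the Mateljević--Pavlović dyadic formula \eqref{eq:formulaForNorm}, whose blocks satisfy $\|\Delta_nH_If\|_p\lesssim \|\Delta_n \widetilde H_I f\|_p$ by a multiplier argument: the coefficient sequences are moments of $f$ versus $|f|$, and \eqref{eq:MomentsOfHI} only used $|f|$.

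It remains to bound $\norm{\widetilde H_I f}_{\HL^p_{\beta,p-2}}$ by $\norm{f}_{A^p_{v_{\gamma,p-2}}}$. The coefficients satisfy $b_m=\int_0^1|f(t)|t^m\,dt=H_M$ applied to something, but $|f|$ is not analytic. So I would pass through the coefficient moments directly. Writing $g=\sum a_n z^n$ for $f$, I need $\sum_m b_m^p (m+3)^{-1}(\log(m+3))^{\beta}\lesssim \norm{f}^p$. The idea is to compare $b_m$ with the Hilbert matrix acting on a decreasing majorant. Using the polar mean bound from the proof of Lemma \ref{lem:MomentsOfAbsH},
\[
|f(t)|\lesssim (1-t)^{-1/p}M_p(t,f),
\]
we get $b_m\lesssim \int_0^1 (1-t)^{-1/p}M_p(t,f)t^m\,dt$, which is a continuous Hilbert/averaging-type operator in the variable $1-t\approx 1/m$. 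After the substitution $t=1-1/x$ this becomes $A_{e,1}+B_{e,1}$ acting on $x\mapsto x^{1/p-1}M_p(1-1/x,f)$. The resulting norm is exactly $\|\cdot\|_{L^p_{1,\gamma}}\asymp\norm{f}_{A^p_{v_{\gamma,p-2}}}$ by Fubini in polar coordinates, and Proposition \ref{prop:HLbdd} (with Lemma \ref{lem:ABbdd}) then gives the bound into $L^p_{2,\gamma}$, i.e. weight $(\log)^{\gamma-p}\geq(\log)^{\beta}$ up to constants.

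The main obstacle is this last step: matching the discrete weighted sum in $m$ with $L^p_{2,\gamma}$ and verifying the kernel comparison $t^m\lesssim$ the $A+B$ splitting kernel near $t=1$ (with $t^m\leq e^{-m(1-t)}$). A further difficulty is tracking the $t$ away from $1$ contributions, which add only a term $\lesssim (\int_0^1|f|)^p$ that is finite by Lemma \ref{lem:MomentsOfAbsH}. Interestingly, this route does not even seem to use $p\leq 2$, except through Lemma \ref{lem:HLineq}. If it did not, the theorem's conjecture would follow, so I expect the subtle point to be in the estimate $|f(t)|\lesssim(1-t)^{-1/p}M_p$, which loses too much. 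If it does lose too much, I revert to the first plan: Lemma \ref{lem:HLineq}(a) on the domain, Theorem \ref{thm:mainHL} for $H_M$ acting on $|a_n|$ coefficients, and (c) on the output via positivity, using $b_m\leq H_M(|a|)_m$ only when $f$ has nonnegative coefficients.
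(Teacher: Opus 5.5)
Your main line, which controls $b_m=\int_0^1\abs{f(t)}t^m\,dt$ through integral means and a weighted Hardy inequality, is a genuinely different route from the paper's. After one repair it works, and your fear that it \emph{loses too much} is unfounded.

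The paper stays on the coefficient side:
\begin{itemize}
\item Lemma~\ref{lem:HLineq}(a) embeds $A^p_{v_{\gamma,p-2}}$ into $\HL^p_{\gamma,p-2}$; this is the only place where $p\le 2$ enters.
\item The bound $\abs{f(t)}\le f^{||}(t)=\sum_n\abs{a_n}t^n$ on $[0,1)$ gives $b_m\le\sum_n\abs{a_n}/(m+n+1)$ for \emph{every} $f$, not only for $f$ with nonnegative coefficients as your fallback says. This is how Corollary~\ref{cor:OtherOperatorsHL} transfers Theorem~\ref{thm:mainHL} to $\widetilde H_I$.
\item Lemma~\ref{lem:HLineq}(c) is applied to $\widetilde H_If$ exactly as you do.
\item $H_I$ is dominated by $\widetilde H_I$ through \eqref{eq:HIbddFinal} and Lemma~\ref{lem:usefulRepresentationHIAp}.
\end{itemize}
With that one observation, your fallback is precisely the paper's proof. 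The paper's route yields Theorem~\ref{thm:mainHL} along the way. Yours never touches the Taylor coefficients of $f$, and that is what removes the restriction $p\le 2$.

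The repair concerns the radius. The bound $\abs{f(t)}\lesssim(1-t)^{-1/p}M_p(t,f)$ is false in general, because $t$ lies on the circle $\abs{z}=t$. Polynomials $f(z)=P(z/t)$, with $P$ a Fej\'er-type polynomial of degree $N\gg(1-t)^{-1}$ peaking at $1$, violate it. Applying the point estimate to $z\mapsto f(\rho z)$ with $\rho=\frac{1+t}{2}$ instead gives $\abs{f(t)}\le 4^{1/p}(1-t)^{-1/p}M_p(\frac{1+t}{2},f)$. Now put $x=1/(1-t)$ and $G(x)=x^{1/p-1}M_p(1-\frac{1}{2x},f)$.
\begin{itemize}
\item Since $(1-\frac1x)^m\le e^{-m/x}$ and $x^{-1}e^{-m/x}\le\frac{2}{x+m}$ (treat $x\ge m$ and $x<m$ separately), you get $b_m\lesssim\int_1^\infty G(x)x^{-1}(1-\frac1x)^m\,dx\le 2\int_1^\infty\frac{G(x)}{x+m}\,dx$.
\item The substitution $r=1-\frac{1}{2x}$ in the polar formula gives $\int_1^\infty G(x)^p(\log ex)^{\gamma}\frac{dx}{x}\lesssim\norm{f}^p_{A^p_{v_{\gamma,p-2}}}$.
\item Proposition~\ref{prop:HLbdd} handles the rest; the part of $G$ on $[1,e]$ only contributes $O(\norm{f}/(m+1))$. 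Since $m\mapsto\int_1^\infty\frac{G(x)}{x+m}\,dx$ is decreasing, the discretisation in Theorem~\ref{thm:HlBdd} applies and yields $\norm{\widetilde H_If}_{\HL^p_{\gamma-p,p-2}}\lesssim\norm{f}_{A^p_{v_{\gamma,p-2}}}$.
\end{itemize}
The pointwise bound is lossy for individual functions (lacunary series, say), but that is irrelevant. Only the weighted $L^p$ norm of $G$ matters, and the Bergman norm sees exactly $M_p$.

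You are also right that nothing here uses $p\le 2$. On the output side, Lemma~\ref{lem:HLineq}(c) holds for every $p>1$, and for $p\ge 2$ even (b) suffices. As far as I can check, once repaired your argument proves the proposition for all $p>1$. That would include the case $p>2$, $\gamma-\beta=p$, which the paper leaves as a conjecture.

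The step from $\widetilde H_I$ to $H_I$ needs cleaning up. The radial majorant $\widetilde H_If(\abs{z})$ is useless. The blockwise claim $\norm{\Delta_nH_If}_p\lesssim\norm{\Delta_n\widetilde H_If}_p$ is not what \eqref{eq:MomentsOfHI} and Lemma~\ref{lem:usefulRepresentationHIAp} provide: they give an upper bound with $t^{2^{n-1}}$ against a lower bound with $t^{2^n}$. So you can only compare after an index shift and summation, as the paper does.

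Your \emph{$\Re$-type} idea, however, can be made precise and is simpler than the paper's argument. For fixed $z\in\D$, every $1/(1-tz)=(1-t\overline{z})/\abs{1-tz}^2$ with $t\in[0,1)$ has positive real part and imaginary part with the sign of $\Im z$. Hence
\[
\sqrt2\,\abs{\widetilde H_If(z)}\ge\int_0^1\abs{f(t)}\,\frac{1-t\Re z+t\abs{\Im z}}{\abs{1-tz}^2}\,dt\ge\int_0^1\frac{\abs{f(t)}}{\abs{1-tz}}\,dt\ge\abs{H_If(z)}.
\]
The real part alone gives no uniform constant. From this, $\norm{H_If}_{A^p_{v_{\beta,p-2}}}\le\sqrt2\,\norm{\widetilde H_If}_{A^p_{v_{\beta,p-2}}}$ follows at once.
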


A part of the proof of the proposition is given in the following lemma, which can be compared with \cite[Lemma 8 (ii)]{Galanopoulos2014}.

\begin{lem}\label{lem:usefulRepresentationHIAp}
Let $\gamma>p-1>0$ and $\beta \leq \gamma-p$. Then
\[
\norm{\widetilde H_I f}_{A^p_{v_{\beta,p-2}}}^p \asymp_{\beta,p} \bigg(\int_0^1 \abs{f(t)} \, dt\bigg)^p + \sum_{n=0}^{\infty}  (n+1)^\beta \bigg(\int_0^1 t^{2^{n-1}} \abs{f(t)} \, dt \bigg)^p.
\]
\end{lem}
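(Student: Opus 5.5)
The plan is to follow the scheme already used in the proof of Proposition~\ref{prop:HIbddNonSharp}, but now keep track of lower bounds as well. The key point is that the Taylor coefficients of $\widetilde H_I f$ form a nonnegative, nonincreasing sequence, which makes the dyadic blocks computable up to constants.

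First, I fix $f$ with $\gamma>p-1$. By Lemma~\ref{lem:MomentsOfAbsH} and Proposition~\ref{prop:HwellDefined}, $\widetilde H_I f$ is a well defined analytic function, with Taylor coefficients
\[
\mu_k:=\int_0^1 t^k\abs{f(t)}\,dt,\qquad k\geq 0.
\]
Clearly $\mu_k\geq 0$ and $\mu_k\geq\mu_{k+1}$. Exactly as in \eqref{eq:formulaForNorm}, I apply \cite[Theorem 2.1(b)]{Mateljevic1984} with the same auxiliary function $\phi$ built from $v_{\beta,p-2}$ and Lemma~\ref{lem:eventuallyIncreasing}. This gives
\[
\norm{\widetilde H_I f}_{A^p_{v_{\beta,p-2}}}^p\asymp_{\beta,p} 2\pi\mu_0^p+\sum_{n=1}^\infty 2^{-n(p-1)}(n+1)^\beta\int_0^{2\pi}\abs{\Delta_n\widetilde H_I f(e^{i\theta})}^p\,d\theta .
\]
This identity holds in $[0,\infty]$, so no finiteness is needed at this stage. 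The $n=0$ block is $\mu_0=\int_0^1\abs{f}$, which is precisely the first term on the right-hand side of the lemma.

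Next, I estimate each block from both sides. For the upper bound I copy \eqref{eq:MomentsOfHI} with $\abs{f}$ in place of $f$: Minkowski's inequality together with \cite[Lemma A]{Pavlovic2013} and the Dirichlet kernel estimate give
\[
\norm{\Delta_n\widetilde H_I f}_{L^p}\lesssim_p 2^{n(1-1/p)}\mu_{2^{n-1}}.
\]
For the lower bound I use positivity and monotonicity directly. For $\abs{\theta}\leq c\,2^{-n}$, with $c$ small (say $c=1/4$), every term satisfies $\Re\big(\mu_k e^{ik\theta}\big)\geq \tfrac12\mu_k$ when $2^{n-1}\leq k<2^n$. Hence
\[
\abs{\Delta_n\widetilde H_I f(e^{i\theta})}\geq\tfrac12\sum_{k=2^{n-1}}^{2^n-1}\mu_k\geq 2^{n-2}\mu_{2^n},
\]
and integrating over this arc gives $\int_0^{2\pi}\abs{\Delta_n\widetilde H_I f}^p\gtrsim_p 2^{n(p-1)}\mu_{2^n}^p$. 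Inserting both bounds yields
\[
\mu_0^p+\sum_{n\geq1}(n+1)^\beta\mu_{2^n}^p\ \lesssim\ \norm{\widetilde H_I f}^p_{A^p_{v_{\beta,p-2}}}\ \lesssim\ \mu_0^p+\sum_{n\geq1}(n+1)^\beta\mu_{2^{n-1}}^p .
\]

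Finally, the index mismatch is harmless. Shifting $n\mapsto n-1$ changes $(n+1)^\beta$ only by a factor comparable to $1$, because $(n+2)^\beta\asymp_\beta(n+1)^\beta$. The single term left uncovered by the lower bound, $(n=1)\ \mu_{1}^p$ (and the $n=0$ term $\mu_{2^{-1}}^p$ in the statement), is dominated by $\mu_0^p$ by monotonicity, since $t^{1/2}\leq 1$. Thus both sides are comparable to the expression in the lemma. The hypothesis $\beta\leq\gamma-p$ is used only to keep us in the setting of the paper: it is needed for applying the lemma in Proposition~\ref{prop:BetterResultForPleq2}, and, via Lemma~\ref{lem:asympOfWeightInSeries}, for relating the sum to $\HL^p$-norms there. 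The comparability itself holds as an identity in $[0,\infty]$.

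The main obstacle I expect is making sure the Mateljevi\'c--Pavlovi\'c theorem applies with comparability constants that are uniform for this logarithmic weight, i.e. that $\phi(2^{-n})^p\asymp 2^{-n(p-1)}(n+1)^\beta$ in both directions. This is exactly where Lemma~\ref{lem:eventuallyIncreasing} and the construction of $\phi$ from the proof of Proposition~\ref{prop:HIbddNonSharp} are reused.
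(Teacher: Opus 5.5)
Your proposal is correct and follows essentially the same route as the paper: the Mateljevi\'c--Pavlovi\'c dyadic norm, Minkowski's inequality plus Pavlovi\'c's Lemma A for the upper block bound with $\mu_{2^{n-1}}$, a lower block bound with $\mu_{2^n}$, and then an index shift. The only difference is local: you get the lower block bound from an elementary positivity argument on the arc $\abs{\theta}\leq 2^{-n}/4$ rather than from Lemma A applied to the monotone coefficients, and you are right that the hypothesis $\beta\leq\gamma-p$ is not needed for the comparability itself (the paper's proof does not use it either).
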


\begin{proof}
By \cite[Lemma A]{Pavlovic2013}, we have for $0<r<1$, $n\in\mathbb N$ and $f\in \HOLO(\D)$
\begin{equation}
\begin{split}
\int_0^1 t^{2^n} \abs{f(t)} \, dt \,  \bigg(  \int_0^{2\pi} \abs{   \sum_{k=2^{n-1}} ^{2^n-1} e^{ik\theta}  }^p \, d\theta  \bigg)^{\frac{1}{p}} &\lesssim_p \bigg( \int_0^{2\pi} \abs{   \sum_{k=2^{n-1}} ^{2^n-1} e^{ik\theta} \int_0^1 t^k \abs{f(t)} \, dt }^p \, d\theta  \bigg)^{\frac{1}{p}} \\
&\lesssim_p \int_0^1 t^{2^{n-1}} \abs{f(t)} \, dt \, \bigg(   \int_0^{2\pi} \abs{   \sum_{k=2^{n-1}} ^{2^n-1} e^{ik\theta} }^p \, d\theta  \bigg)^{\frac{1}{p}}.  
\end{split}
\end{equation}
Similarly to \eqref{eq:MomentsOfHI}, it follows that for $n\in\mathbb N$
\begin{align*}
2^{n(1-1/p)} \int_0^1 t^{2^n} \abs{f(t)} \, dt &\lesssim_p \bigg( \int_0^{2\pi} \abs{ \Delta_n \widetilde H_I f(e^{i\theta})  }^p \, d\theta \bigg)^{\frac{1}{p}} \\
&\lesssim_p 2^{n(1-1/p)}\int_0^1 t^{2^{n-1}} \abs{f(t)} \, dt,  
\end{align*}
and by calculations comparable to \eqref{eq:formulaForNorm}, we can conclude that
\[
\norm{\widetilde H_I f}_{A^p_{v_{\beta,p-2}}}^p
\lesssim_{\beta,p} \bigg(\int_0^1 \abs{f(t)} \, dt\bigg)^p + \sum_{n=0}^{\infty}  (n+1)^\beta \bigg(\int_0^1 t^{2^{n-1}} \abs{f(t)} \, dt\bigg)^p
\]
and
\[
\norm{\widetilde H_I f}_{A^p_{v_{\beta,p-2}}}^p
\gtrsim_{\beta,p} \bigg(\int_0^1 \abs{f(t)} \, dt\bigg)^p + \sum_{n=1}^{\infty}  n^\beta  \bigg(\int_0^1 t^{2^{n-1}} \abs{f(t)} \, dt\bigg)^p, 
\]

which yields
\[
\norm{\widetilde H_I f}_{A^p_{v_{\beta,p-2}}}^p \asymp_{\beta,p} \bigg(\int_0^1 \abs{f(t)} \, dt\bigg)^p + \sum_{n=0}^{\infty}  (n+1)^\beta \bigg(\int_0^1 t^{2^{n-1}} \abs{f(t)} \, dt \bigg)^p.
\]
\end{proof}

\begin{proof}[Proof of Proposition \ref{prop:BetterResultForPleq2}]
From Lemma \ref{lem:HLineq} and Corollary \ref{cor:OtherOperatorsHL}, it follows that
\begin{equation}\label{eq:tildeHIbdd}
\norm{\widetilde H_I f}_{ A^p_{v_{\beta,p-2}}} \asymp_{\beta,p} \norm{\widetilde H_I f}_{ \HL^p_{\beta,p-2}} \lesssim_{\beta,\gamma,p} \norm{f}_{\HL^p_{\gamma,p-2}}\lesssim_{\gamma,p} \norm{f}_{ A^p_{v_{\gamma,p-2}}}. 
\end{equation}
Since all calculations in Proposition \ref{prop:HIbddNonSharp}, except the final conclusion, hold when $\gamma-\beta\geq p$, we can apply the first $\lesssim$ in \eqref{eq:HIbddFinal} and Lemma \ref{lem:usefulRepresentationHIAp} followed by \eqref{eq:tildeHIbdd} to obtain
\[
\norm{H_I f}_{A^p_{v_{\beta,p-2}}}^p \lesssim_{\beta,p}   \norm{\widetilde H_I f}_{A^p_{v_{\beta,p-2}}}^p \lesssim_{\gamma,p} \norm{f}_{ A^p_{v_{\gamma,p-2}}}.
\]

\end{proof}

Proposition \ref{prop:BetterResultForPleq2}

By straightforward modifications to the proofs of

We finish the paper by giving a final corollary concerning the generalized Hilbert operator $H_g$ introduced in \cite{Galanopoulos2014}. To this end, we present the following definition of the analytic mean Lipschitz spaces, where $\Delta_n$ is defined in \eqref{eq:DeltaDefinition}: 
\[
\Lambda(p,\alpha) = \bigg\{ f\in H^p : \norm{f}_{\Lambda(p,\alpha)}^p = \abs{f(0)}^p + \sup_{n\in\mathbb N\cup\{0\}} 2^{n(\alpha-1)p} \int_0^{2\pi} \abs{ \Delta_n(f')(re^{it}) }^p \frac{dt}{2\pi}\bigg\},\quad p>1,\, 0<\alpha< 1;
\]
see \cite{Galanopoulos2014} for some equivalent norms.

We also define the bounded mean dyadic space 
\[
\BDM(p)=\bigg\{ \sum_{n=0}^{\infty} a_n z^n\in \HOLO(D) : \norm{f}_{\BDM(p)}^p = \abs{a_0}^p +  \sup_{n\in \mathbb N} 2^{1-n} \sum_{k=2^{n-1}}^{2^n-1} \abs{a_k}^p  \bigg\}.
\]

The operators of interest are 
\[
H_g(f)(z) = \int_0^1 f(t)g'(tz) \, dt \quad\text{ and }\quad \widetilde H_g(f)(z) = \int_0^1 \abs{f(t)} g'(tz) \, dt \quad z\in\mathbb \D,\, g\in \HOLO(\D).
\]

Our final result concerns the boundedness of these generalized Hilbert operators.

\begin{thm}\label{thm:mainHg}
Let $\gamma>p-1>0$. Then 
\begin{enumerate}[label={(\roman*)}]
\item\label{item:HgHL}
\[
\sup_{\norm{g'}_{\BDM(p)}=1}\norm{H_g}_{\HL^p_{\gamma,p-2} \to \HL^p_{\beta,p-2}} <\infty
\]
if and only if $\gamma-\beta\geq p$. 
\end{enumerate}

Similarly, 
\begin{enumerate}[resume,label={(\roman*)}]
\item\label{item:pleq2Hg}
for $p\leq 2$
\[
\sup_{\norm{g}_{\Lambda(p,1/p)}=1}\norm{H_g}_{ A^p_{v_{\gamma,p-2}} \to A^p_{v_{\beta,p-2}}} <\infty
\]
if and only if $\gamma-\beta\geq p$, and 
\item\label{item:pgeq2Hg} for $p>2$
\[
\sup_{\norm{g}_{\Lambda(p,1/p)}=1}\norm{H_g}_{ A^p_{v_{\gamma,p-2}} \to A^p_{v_{\beta,p-2}}} <\infty
\]
if $\gamma-\beta > p$ and the supremum is infinite if $\gamma-\beta < p$.
\end{enumerate}

The sufficient conditions for the suprema to be finite are still sufficient if $H_g$ is replaced by $\widetilde H_g$. The equivalence in \ref{item:HgHL} is also true considering $\widetilde H_g$.

\end{thm}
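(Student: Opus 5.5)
The plan is to reduce every part of Theorem \ref{thm:mainHg} to the results already proved for $H_I$ and $\widetilde H_I$, using two facts about $g$. Write $g'(z)=\sum_k b_k z^k$. Then
\[
H_g f(z)=\sum_m b_m z^m\int_0^1 t^m f(t)\,dt .
\]
So the $m$-th Taylor coefficient of $H_gf$ is $b_m$ times the $m$-th coefficient of $(m+1)^{-1}$-weighted moment data, namely $b_m\,\mu_m(f)$, where $\mu_m(f)=\int_0^1 t^m f(t)\,dt$. For $H_I$ the corresponding multiplier is $b_m\equiv 1$.

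\textbf{Part \ref{item:HgHL}, sufficiency.} For $f$ with nonnegative coefficients we have $\mu_m(f)=\sum_n a_n/(m+n+1)$, which is decreasing in $m$. Hence on each dyadic block $I_n=[2^{n-1},2^n)$:
\[
\sum_{m\in I_n}|b_m|^p\mu_m(f)^p (m+3)^{-1}(\log(m+3))^\beta \lesssim \mu_{2^{n-1}}(f)^p\, n^\beta\, 2^{-n}\sum_{m\in I_n}|b_m|^p \lesssim \norm{g'}_{\BDM(p)}^p\, n^\beta \mu_{2^{n-1}}(f)^p .
\]
The factor $(m+3)^{p-\alpha-3}$ with $\alpha=p-2$ equals $(m+3)^{-1}$, which is why the weight appears this way. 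The quantity $\sum_n n^\beta \mu_{2^{n-1}}(f)^p$ is, up to constants and again by monotonicity, comparable to $\norm{H_M f}^p_{\HL^p_{\beta,p-2}}$, with the $n=0$ term treated separately. Theorem \ref{thm:mainHL}, via Corollary \ref{cor:OtherOperatorsHL}, then gives the bound uniformly in $g$. For general $f$ I use $|\mu_m(f)|\le \mu_m(\widetilde\cdot)$, that is, I pass to $f^{||}$, exactly as in Corollary \ref{cor:OtherOperatorsHL}; the same argument covers $\widetilde H_g$.

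\textbf{Part \ref{item:HgHL}, necessity.} Take $g(z)=\log\frac{1}{1-z}$, so $g'=1/(1-z)$ has $b_k=1$ and finite $\BDM(p)$ norm. Then $H_g=H_I$, and Proposition \ref{prop:necessaryCondHL} gives unboundedness when $\gamma-\beta<p$. Since $\widetilde H_g=\widetilde H_I$ for this $g$, this also settles the equivalence for $\widetilde H_g$.

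\textbf{Parts \ref{item:pleq2Hg} and \ref{item:pgeq2Hg}.} Here I follow the proof of Proposition \ref{prop:HIbddNonSharp}, replacing the Fejér-type block by $\Delta_n g'$. Using the Mateljević–Pavlović formula \eqref{eq:formulaForNorm} and Minkowski's inequality,
\[
\Big(\int_0^{2\pi}|\Delta_n H_g f|^p\Big)^{1/p}\le \int_0^1|f(t)|\,\norm{(\Delta_n g')_t}_{L^p}\,dt \lesssim \int_0^1|f(t)|\,t^{2^{n-1}}dt\;\norm{\Delta_n g'}_{L^p},
\]
where the last step uses \cite[Lemma A]{Pavlovic2013}. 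The definition of $\Lambda(p,1/p)$ gives $\norm{\Delta_n g'}_{L^p}\lesssim 2^{n(1-1/p)}\norm{g}_{\Lambda(p,1/p)}$. This reproduces the bound $\norm{H_gf}^p\lesssim \norm{g}^p_{\Lambda(p,1/p)}\big(\int_0^1|f|\big)^p+\dots$ in the first line of \eqref{eq:HIbddFinal}, uniformly in $g$. It yields the case $\gamma-\beta>p$ for all $p$. For $p\le2$ and $\gamma-\beta\ge p$, I combine this with Lemma \ref{lem:usefulRepresentationHIAp} and \eqref{eq:tildeHIbdd}, as in the proof of Proposition \ref{prop:BetterResultForPleq2}. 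Replacing $f$ by $|f|$ handles $\widetilde H_g$.

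For the necessity in both parts I again take $g=\log\frac{1}{1-z}$. Its membership in $\Lambda(p,1/p)$ follows from $\norm{\Delta_n g'}_p\asymp 2^{n(1-1/p)}$, and then Proposition \ref{prop:HIonBergmanNotBdd} applies. After normalising $g$, the supremum dominates a positive multiple of $\norm{H_I}=\infty$.

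\textbf{Main obstacle.} The delicate step is the monotonicity/dyadic comparison in part \ref{item:HgHL}. It requires showing that $\sum_n n^\beta\mu_{2^{n-1}}(f)^p\lesssim\norm{H_Mf}^p_{\HL^p_{\beta,p-2}}$ for nonnegative coefficients. The point is that $\mu_{2^{n-1}}\asymp\mu_m$ fails for $m\in I_n$ in general, so I would instead compare with the previous block, $\mu_{2^{n-1}}\le \mu_m$ for $m\in I_{n-1}$, and absorb the index shift into $n^\beta\asymp(n-1)^\beta$.
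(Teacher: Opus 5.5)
Your proposal is correct and follows the paper's proof essentially step for step. You use the same test function $g'=(1-z)^{-1}$ for all necessity statements, the same dyadic-block estimate against the $\BDM(p)$ norm in part (i), and in parts (ii)--(iii) the same Minkowski/Pavlovi\'c Lemma A/Mateljevi\'c--Pavlovi\'c argument followed by Lemma \ref{lem:usefulRepresentationHIAp} and \eqref{eq:tildeHIbdd}.

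The only deviation is in part (i). You compare $\sum_n n^\beta\mu_{2^{n-1}}(f^{||})^p$ directly with $\norm{H_M f^{||}}^p_{\HL^p_{\beta,p-2}}$ by monotonicity, whereas the paper passes through Lemma \ref{lem:usefulRepresentationHIAp} and Lemma \ref{lem:HLineq}(c). Your route is slightly more elementary. Your block shift is valid, but the obstacle you flag does not actually arise: for nonnegative coefficients $\mu_{2m}\ge\frac12\mu_m$, so $\mu_m\asymp\mu_{2^{n-1}}$ on each block.
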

\begin{proof}
Consider the function $g\colon z \mapsto 1-\log(1-z)$ for which $g'\colon z\mapsto (1-z)^{-1}$. It is clear that $g'\in \BDM(p)$ and some of the calculations in \eqref{eq:MomentsOfHI} show that $g\in \Lambda(p,1/p)$. It follows from Theorems \ref{thm:main} and \ref{thm:mainHL} that the suprema are infinite if $\gamma-\beta<p$. Assume now that $\gamma-\beta\geq p$. We prove that 
\begin{equation}\label{eq:HgHL}
\norm{H_g}_{ \HL^p_{\gamma,p-2} \to \HL^p_{\beta,p-2}}\lesssim_{\beta,p} \norm{\widetilde H_I}_{ \HL^p_{\gamma,p-2} \to \HL^p_{\beta,p-2}} \norm{g'}_{\BDM(p)} , \quad g'\in\BDM(p) 
\end{equation}
and
\begin{equation}\label{eq:HgAp}
\norm{H_g}_{ A^p_{v_{\gamma,p-2}} \to A^p_{v_{\beta,p-2}}}
\lesssim_{\gamma,\beta,p}
 \norm{\widetilde H_I}_{ A^p_{v_{\gamma,p-2}} \to A^p_{v_{\beta,p-2}}} \norm{g}_{\Lambda(p,1/p)}, \quad g\in\Lambda(p,1/p).
\end{equation}

To this end, let $(c_k)$ be the Taylor coefficients of $g'$. We have for $f\in \HL^p_{\gamma,p-2}$
\begin{align*}
\norm{H_g f}_{\HL^p_{\beta,p-2}}^p &= \sum_{n=0}^{\infty}  \frac{(\log(n+3))^{\beta}}{n+3}  \abs{\int_0^1 f(t) t^n \, dt  }^p\abs{c_n}^p \\
&\leq \frac{(\log 3)^{\beta}}{3}  \bigg(\int_0^1 \abs{f(t)} \, dt  \bigg)^p\abs{c_0}^p +\sum_{k=1}^{\infty} \sum_{n=2^{k-1}}^{2^k-1}  \frac{(\log(n+3))^{\beta}}{n+3}  \bigg(\int_0^1 \abs{f(t)} t^n \, dt \bigg)^p\abs{c_n}^p\\
&\lesssim_{\beta}   \bigg(\int_0^1 \abs{f(t)} \, dt  \bigg)^p\abs{c_0}^p +  \sum_{k=1}^{\infty} \frac{k^{\beta}}{2^{k-1}}  \bigg(\int_0^1 \abs{f(t)} t^{2^{k-1}} \, dt \bigg)^p  \sum_{n=2^{k-1}}^{2^k-1}  \abs{c_n}^p \\
&\leq  \bigg[ \bigg(\int_0^1 \abs{f(t)} \, dt  \bigg)^p +  \sum_{k=1}^{\infty} k^{\beta}  \bigg(\int_0^1 \abs{f(t)} t^{2^{k-1}} \, dt \bigg)^p  \bigg] \norm{g'}_{\BDM(p)}^p,
\end{align*}
which combined with Lemmas \ref{lem:usefulRepresentationHIAp} and \ref{lem:HLineq} gives \eqref{eq:HgHL}. Corollary \ref{cor:OtherOperatorsHL} now shows that \ref{item:HgHL} holds.

Similarly to \eqref{eq:formulaForNorm} and \eqref{eq:MomentsOfHI}, we have for $f\in A^p_{v_{\gamma,p-2}}$
\begin{align*}
\norm{H_g f}_{ A^p_{v_{\beta,p-2}}}^p &\asymp_{\beta,p} \sum_{n=0}^{\infty} 2^{-n(p-1)} (n+1)^\beta \int_0^{2\pi} \abs{ \Delta_n H_g f(e^{i\theta})  }^p \, d\theta \\
&= 2\pi \abs{c_0 \int_0^1 f(t) \, dt }^p+ \sum_{n=1}^{\infty} 2^{-n(p-1)} (n+1)^\beta \int_0^{2\pi} \abs{   \sum_{k=2^{n-1}}^{2^n-1} c_k e^{ik\theta} \int_0^1 t^k f(t) \, dt }^p \, d\theta  \\
&\lesssim_p \abs{c_0}^p \bigg(\int_0^1 \abs{f(t)} \, dt \bigg)^p+\sum_{n=0}^{\infty}  \frac{ (n+1)^\beta }{ 2^{n(p-1)}  } 
\bigg(\int_0^1 \abs{f(t)} t^{2^{n-1}} \, dt\bigg)^p \,   \int_0^{2\pi} \abs{   \sum_{k=2^{n-1}} ^{2^n-1} c_k e^{ik\theta}     }^p \, d\theta\\  
&\lesssim \bigg[ \bigg(\int_0^1 \abs{f(t)} \, dt \bigg)^p+\sum_{n=1}^{\infty} (n+1)^\beta 
\bigg(\int_0^1 \abs{f(t)} t^{2^{n-1}} \, dt\bigg)^p \bigg] \norm{g}_{\Lambda(p,1/p)}^p\\
&\asymp_{\beta,p} \norm{\widetilde H_I f}_{A^p_{v_{\beta,p-2}}}^p \norm{g}_{\Lambda(p,1/p)}^p, 
\end{align*}
where the last step follows from Lemma \ref{lem:usefulRepresentationHIAp}. The propositions \ref{prop:BetterResultForPleq2} and \ref{prop:HIbddNonSharp} now give  \ref{item:pleq2Hg} and \ref{item:pgeq2Hg}, respectively.

The sufficiency statements about $\widetilde H_g$ are proved similarly to the above; the necessity statement follows from Corollary \ref{cor:OtherOperatorsHL}.
\end{proof}

\section{Acknowledgments}
The author was funded by the Ruth and Nils-Erik Stenbäck Foundation and the work has been carried out at NTNU -- Norwegian University of Science and Technology.

\bibliographystyle{abbrv}
\bibliography{bibliography}

\end{document}